\newtheorem{theorem}{Theorem}
\newtheorem{corollary}[theorem]{Corollary}
\newtheorem{definition}[theorem]{Definition}
\newtheorem{lemma}[theorem]{Lemma}
\newtheorem{problem}[theorem]{Problem}
\newcommand{\field}[1]{\mathbb{#1}}
\newcommand{\R}{\field{R}}
\newcommand{\E}{\field{E}}
\newcommand{\p}{\field{P}}
\newcommand{\tr}{\mathrm{tr}}
\newcommand{\met}[2][ccccccccccccccccccccccccccccccccc]{\left[ \begin{array}{#1} #2 \\ \end{array}\right]}
\numberwithin{equation}{section}
\begin{document}

\title{Shy and Fixed-Distance Couplings of Brownian Motions on Manifolds}
\author{Mihai N. Pascu}
\author{Ionel Popescu}
\address{"Transilvania" University of Bra\c sov \\
Faculty of Mathematics and Computer Science \\
Str. Iuliu Maniu Nr. 50 \\
Brasov -- 500091 \\
ROMANIA }
\email{mihai.pascu@unitbv.ro}
\address{School of Mathematics, Georgia Institute of Technology, 686 Cherry Street, Atlanta, GA 30332, USA \and  ``Simion Stoilow'' Institute of Mathematics   of Romanian Academy, 21 Calea Grivi\c tei, Bucharest, ROMANIA}
\email{ipopescu@math.gatech.edu,  ionel.popescu@imar.ro}

\begin{abstract}
In this paper we introduce three Markovian couplings of Brownian motions on smooth Riemannian  manifolds without boundary which sit at the crossroad of two concepts.   The first concept is the one of shy coupling put forward in \cite{Burdzy-Benjamini} and the second concept is the lower bound on the Ricci curvature and the connection with couplings made in \cite{ReSt}.

The first construction is the shy coupling, the second one is a fixed-distance coupling and the third is a coupling in which the distance between the processes is a deterministic exponential function of time.

The result proved here is that an arbitrary Riemannian manifold satisfying some technical conditions supports shy couplings.  If in addition, the Ricci curvature is non-negative, there exist fixed-distance couplings.  Furthermore, if the Ricci curvature is bounded below by a positive constant, then there exists a coupling of Brownian motions for which the distance between the processes is a decreasing exponential function of time.  The constructions use the intrinsic geometry, and relies on an extension of the notion of frames which plays an important role for even dimensional manifolds.

In fact, we provide a wider class of couplings in which the distance function is deterministic in Theorem \ref{t:100} and Corollary~\ref{Cor:9}.

As an application of the fixed-distance coupling we derive a maximum principle for the gradient of harmonic functions on manifolds with non-negative Ricci curvature.

As far as we are aware of, these constructions are new, though the existence of shy couplings on manifolds is suggested by Kendall in \cite{Kendall}.

\end{abstract}
\thanks{Both authors were partially supported by a grant of the Romanian National Authority for Scientific Research, CNCS - UEFISCDI, project number PN-II-RU-TE-2011-3-0259. The first author was also partially supported by a grant of the Romanian National Authority for Scientific Research, CNCS - UEFISCDI, project number PNII-ID-PCCE-2011-2-0015, and the second author was also partially supported by Marie Curie Action Grant PIRG.GA.2009.249200.}


\maketitle

\section{Introduction }

A first motivation of the present work is  the following (stochastic) modification
of the classical \emph{Lion and Man problem} of Rado (\cite{Littlewood}) on manifolds. Consider
a Brownian Lion $X_{t}$ and a Brownian Man $Y_{t}$ running on a $d$-dimensional Riemannian manifold $M$ (for instance the unit sphere in $\R^{3}$).

We describe two versions of the classical Lion and Man problem.
\begin{problem}[Fast/Finite Time Coupling]\label{p:1} Can the Lion capture the Man?

More precisely, given two distinct starting points $x,y\in M$ and a Brownian motion $Y_{t}$ on $M$ starting
at $y$, can one find a Brownian  motion $X_{t}$ on $M$ starting
at $x$ such that $\tau =\inf \left\{ t\geq 0:X_{t}=Y_{t}\right\} $ is almost
surely finite (or almost surely bounded)? A weaker version of this problem is whether for a
given $\epsilon >0$ and a given Brownian motion $Y_{t}$ on $M$ starting at $y$ one can find a
Brownian motion $X_{t}$ on $M$ starting at $x$ such that $
\tau =\inf \left\{ t\geq 0:d(X_{t},Y_{t}) =\epsilon
\right\} $ is almost surely finite (or almost surely bounded).  Here $d(x,y)$ stands for  the Riemannian distance on $M$.

\end{problem}

One example of coupling which is known in the literature as the \emph{mirror coupling}, and it was introduced by Lindvall and Rogers \cite{Lin-Rog} for processes defined on Euclidean spaces, and by Cranston in \cite{CranstonJFA} and Kendall \cite{Kendall5} in the case of processes defined on manifolds, the so-called \emph{Cranston-Kendall mirror coupling}.  It turns out that this coupling is a very useful and versatile construction when it comes to various geometric and analytic properties on manifolds.
For instance, it was shown in \cite{Kendall5}, for the case of manifolds with non-negative Ricci curvature, that the Man and the Lion must meet in finite time under this  mirror coupling.

Geometrically, the mirror coupling makes the motions $X_{t},Y_{t}$ move toward each other in the geodesic direction.   Closely related coupling is the \emph{synchronous} coupling in which the Brownian motions $X_{t},Y_{t}$ move parallel to each other in the geodesic direction and was used for example in \cite{MR2215664}.  On a different note, continuous versions of couplings of Brownian motions are constructed in \cite{MR2790368} and \cite[Theorem 10.37]{Stroock2}.

\emph{Though couplings under which the particles meet in finite time have received a lot of attention in the literature, as for instance the recent maximality properties analyzed in \cite{HsuSturm}, \cite{Kuw2} or \cite{Kuw1} it is not our interest in this paper.  }

If the couplings in Problem~\ref{p:1} are trying to meet as fast as possible, there is also the scenario of couplings which prevents the particles from meeting.  We formulate this as follows.

\begin{problem}[Strong Shy Coupling]\label{p:2}   Can the Man avoid being eaten by the Lion indefinitely?

More precisely, given two distinct starting points $x,y\in M$ and a Brownian motion $
X_{t}$  on $M$ starting at $x$,  can one find a Brownian motion $Y_{t}$ on $M$ starting at $y$ such
that almost surely $X_{t}\neq Y_{t}$ for all $t\geq 0$? A stronger version
of the question is whether the Brownian motion $Y_{t}$ can be chosen in such
a way that there exists an $\epsilon >0$ such that almost surely $%
d\left( X_{t},Y_{t}\right) \geq \varepsilon $ for all $t\geq 0$.

\end{problem}

The notion of \textit{shy coupling} of Brownian motions was introduced in
\cite{Burdzy-Benjamini} and subsequently studied in \cite{Burdzy-Kendal} and \cite{Kendall} and is a coupling for which, with positive probability,  the distance between the two
processes stays positive for all times.   A stronger version of shyness ($\epsilon$-shyness, $\epsilon>0$) asserts that with positive probability the distance between the processes is greater than $\epsilon$.  In this paper we use this latter version of shyness, in the stronger sense where the distance between the processes is greater than $\epsilon$ with probability $1$.

To set up the terminology, we mention that all couplings in the present paper are Markovian couplings in the sense of \cite{Burdzy-Benjamini} and introduced in Section~\ref{s:p}.

In a different direction, a synthetic notion of a lower bound on the Ricci curvature was settled in \cite{MR2480619,MR2237206,MR2237207} and is a very useful tool in analysis on measure metric spaces which is a very active area of research nowdays.  On the other hand, the notion of couplings and lower bound on Ricci curvature was pioneered in \cite{Kendall5}.  Related to this, a notion of Ricci curvature in discrete spaces appears in \cite{MR2484937} and see also \cite{MR2502429,MR3021521,MR2989449,MR2872958}.

In this spirit, a second motivation of our work comes from \cite[Corollary 1.4]{ReSt} which states the following.
\begin{corollary}\label{c:0}
On a complete Riemannian manifold $M$ the Ricci tensor satisfies $Ric\ge k$ if and only if there exits a conservative Markov process  $(\Omega, \mathcal{A},\p^{z},Z_{t})_{z\in M\times M,t\ge0}$ with values in $M\times M$ such that the coordinate processes $(X_{t})_{t\ge0}$ and $(Y_{t})_{t\ge0}$ are Brownian motions on $M$ and such that for all $z = (x,y)$ and all $t\ge0$,
\begin{equation}\label{e:i0}
d(X_{t},Y_{t})\le e^{-kt/2}d(x,y),\quad \p^{z}-a.s.
\end{equation}
\end{corollary}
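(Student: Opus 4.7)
The plan is to prove the two implications separately, via a parallel (synchronous) coupling for the forward direction and the Kantorovich--Rubinstein duality combined with the Bakry--\'{E}mery characterization for the converse.

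For the forward direction, assume $Ric\ge k$.  Off the cut locus $\mathrm{Cut}$ of the diagonal in $M\times M$, I fix a frame $e=(e_{1},\ldots,e_{d})$ at $x$ with $e_{1}$ tangent to the unique minimizing geodesic $\gamma$ from $x$ to $y$, and transport it to a frame $\tilde e$ at $y$ by parallel transport along $\gamma$.  I then drive both $X_{t}$ and $Y_{t}$ by the same $\R^{d}$-valued Brownian motion lifted through these frames (the standard Eells--Elworthy--Malliavin construction, lifted to the product of orthonormal frame bundles).  Applying It\^{o}'s formula to the radial process $r_{t}=d(X_{t},Y_{t})$, the martingale part vanishes for this synchronous choice (the geodesic component $\tilde e_{1}=\gamma'(r)$ contributes $-1\cdot dB^{1}+1\cdot dB^{1}=0$ against the two partial gradients of $d$, while transverse directions are orthogonal to those gradients), and the drift coincides with the trace of the index form along parallel vector fields, which by the Jacobi identity equals $-\tfrac{1}{2}\int_{0}^{r_{t}}Ric(\dot\gamma,\dot\gamma)\,ds\le-\tfrac{k}{2}\, r_{t}$.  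Gronwall then yields $r_{t}\le e^{-kt/2}\, r_{0}$ pathwise, which is exactly \eqref{e:i0}.

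For the converse, the coupling immediately implies $\E^{(x,y)}[d(X_{t},Y_{t})]\le e^{-kt/2}\, d(x,y)$, i.e.\ the $L^{1}$-Wasserstein contraction $W_{1}(\mathrm{Law}(X_{t}),\mathrm{Law}(Y_{t}))\le e^{-kt/2}\, d(x,y)$.  Dualizing against $1$-Lipschitz test functions $f$ gives $|P_{t}f(x)-P_{t}f(y)|\le e^{-kt/2}\|\nabla f\|_{\infty}\, d(x,y)$, and a standard short-time iteration (replacing $f$ by $P_{s}f$ and subdividing the time interval) upgrades this to the pointwise gradient estimate $|\nabla P_{t}f|(x)\le e^{-kt/2} P_{t}|\nabla f|(x)$.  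The Bakry--\'{E}mery / von Renesse--Sturm equivalence between this gradient estimate and the lower Ricci bound then gives $Ric\ge k$.

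The main obstacle is handling the cut locus in the forward direction.  The parallel coupling is only smoothly defined off $\mathrm{Cut}$, where $d$ fails to be $C^{2}$ and the SDE on the product frame bundle loses its classical form.  One must either show that the coupled process does not accumulate positive occupation time on $\mathrm{Cut}$, or use a Cranston--Kendall-type approximation of $d$ by smooth functions $d_{\varepsilon}$ with a favorable Laplacian comparison, so that It\^{o}'s formula for $r_{t}$ holds in a distributional / viscosity sense with the correct inequality.  A secondary difficulty is the measurable selection of frames across $M\times M\setminus\mathrm{Cut}$, which can face topological obstructions in even dimensions; this is precisely what motivates the extended-frame device announced later in the paper.
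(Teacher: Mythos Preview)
This corollary is not proved in the present paper; it is quoted from \cite[Corollary~1.4]{ReSt} as motivation, and the only remark the paper adds is that the forward implication there is obtained via the synchronous coupling. Your proposal follows exactly that route, so it is aligned with the original source and there is nothing to compare against in the paper itself.

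Two small technical corrections. In the forward direction, the drift of $r_{t}$ is not literally the index form of the parallel fields; it is the sum of index forms of the Jacobi fields with parallel boundary data, and this is only \emph{bounded above} by $-\tfrac{1}{2}\int_{0}^{r_{t}}Ric(\dot\gamma,\dot\gamma)\,ds$ via the index lemma (compare the inequality \eqref{e7b:20} later in the paper). The Gronwall conclusion is unaffected. In the converse direction, your claimed ``upgrade'' from the $L^{\infty}$ gradient bound $|\nabla P_{t}f|_{\infty}\le e^{-kt/2}|\nabla f|_{\infty}$ to the pointwise estimate $|\nabla P_{t}f|\le e^{-kt/2}P_{t}|\nabla f|$ does not follow from a mere time subdivision, and in fact it is unnecessary: the $L^{\infty}$ bound already suffices to recover $Ric\ge k$ by the short-time expansion argument in \cite{ReSt}. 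The paper itself runs essentially this argument for its own localized refinement in the final corollary of Section~\ref{s:11}.

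Your diagnosis of the cut-locus issue and of the frame-selection obstruction in even dimensions is accurate; these are precisely the points addressed by the extension-beyond-the-cut-locus device and the $N$-frame construction in Section~\ref{s:6}.
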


The coupling that is used in \cite{ReSt} under the hypothesis that $Ric \ge k$ is the synchronous coupling alluded above.

A natural question, which fits our interests in the present paper, is to see if one can find couplings of Brownian motions $X_{t},Y_{t}$ such that \eqref{e:i0} is saturated.   For instance, if $k=0$ this amounts to finding a fixed-distance coupling which is in fact a strong version of a shy coupling.

Here is an outline of the paper.   Section~\ref{s:p} is about notations and basic results and notions.
In Section~\ref{s:6} we have the main result.   This states  that on a complete $d$-dimensional Riemannian manifold $M$ with positive injectivity radius, the Ricci curvature uniformly bounded from below and the sectional curvature uniformly bounded from above we can construct shy couplings.  This existence result of shy coupling on manifolds is also stated in Kendall \cite[Section 4]{Kendall} without proof but with a hint on how to do it.  Our approach is different.  Moreover, if the Ricci curvature is in addition non-negative, we can also construct fixed-distance couplings.  Finally, we show that if the Ricci curvature is actually bounded from below by a positive constant, then we can find fast approaching couplings, for which the distance between processes decays exponentially fast to $0$.  In fact our main result follows as a consequence of a much more general finding which shows that under some technical conditions on a function $F$ defined on an interval of the positive line, there exists a coupling of Brownian motions $X_{t},Y_{t}$ such that $\rho_{t}=d(X_{t},Y_{t})$ satisfies
\[
\frac{d\rho_{t}}{dt}=-\frac{1}{2}F(\rho_{t})
\]
for small times $t$.  Under the assumption that the Ricci curvature is non-negative, this can be extended for all values of $t\ge0$.  This is the content of Theorem~\ref{t:100} and it shows that there is much wider classes of couplings with deterministic distance.

Moreover, for a given function $\rho:[0,\infty)\to[0,\infty)$,  Corollary~\ref{Cor:9} gives conditions on $\rho$ such that this is realized as the distance function between two co-adapted Brownian motions.

We want to point a few details about the techniques.  In the first place we treat separately the cases when $d$ is odd, respectively even.  In the case of odd dimensional manifolds we can carry out the proof based on the orthonormal frame bundle.   For even dimensional manifolds we introduce the notion of $N$-frames at a point $x\in M$ which is  an embedding of the tangent space $T_{x}M$ into $\R^{N}$.  As it turns out, it suffices to use this construction for the particular case $N=d+1$, however, for the general $N$ this may be of independent interest by itself.   This is somewhat reminiscent of works on stochastic flows given for example in \cite{Elworthy,Elworthy2}.

Here is a brief exposition of the idea.  Suppose we have $X_{t}$ a  Brownian motions and want to exhibit another one $Y_{t}$ which is driven in some sense by $X_{t}$.   From a loose point of view what we do first  is to split the orthogonal to the tangent space at $X_{t}$ into orthogonal planes.  This splitting is possible only if the dimension $d$ is odd.  If this is the case, using the parallel transport along the geodesic, we can transport these planes at $X_{t}$ into orthogonal planes at $Y_{t}$.  Next we want the components of driving Euclidean Brownian motion at $X_{t}$ in these planes to be transported at $Y_{t}$ using parallel transport along the geodesic joining $X_{t}$ and $Y_{t}$ and then rotated by the same angle (chosen appropriately) in each of the transported planes at $Y_{t}$.   This is how we construct all three couplings first locally and then by patching them together to a global one.  In the even dimensional case using the $d+1$-frames we essentially add one more dimension to the tangent space and  carry out the same program.

In Section~\ref{s:10} we discuss some geometric aspects related to the main result in the previous section (Theorem~\ref{t:7}), and we present a localized version of the shy coupling, which is used in Section~\ref{s:11} to come back to the motivations of the paper, namely the Lion and the Man and also the connection with the lower bound on the Ricci curvature.

\section{Preliminaries}\label{s:p}

By $M$ we denote a Riemannian manifold.   In this paper all Riemannian manifolds are assumed to be complete.   For a given $d$-dimensional Riemannian manifold $M$, we use the standard notations from \cite{Elton} or \cite{Stroock2}
to denote by $\mathcal{O}(M)$ the orthonormal frame bundle.  For a given orthonormal frame $U$ at a point $x\in M$ and $\xi\in \R^{d}$, $H_{\xi}(U)$ is the horizontal lift of $U\xi\in T_{x}M$ at the point $U\in \mathcal{O}(M)$.   We will use the simpler notation of $H_{i}$ for $H_{e_{i}}$, with $\{e_{i}\}_{i=1,\dots,n}$ denoting the standard basis of $\R^{d}$.

We collect here some notions from differential geometry which will be used in the sequel.  The reader is referred to \cite{DoC} or \cite{Cheeger} for basic notions and results.    The curvature tensor $R_{x}$ at $x$ is $R_{x}(X,Y)=\nabla_{X}\nabla_{Y}-\nabla_{Y}\nabla_{X}-\nabla_{[X,Y]}$ and the Ricci tensor is the contraction $Ric_{x}(X,Y)=\sum_{i=1}^{d}\langle R_{x}(X,E_{i})E_{i},Y \rangle$, where $\{E_{i}\}_{i=1,\dots, d}$ is any orthonormal basis at $x$ and $X,Y\in T_{x}M$.   This definition of the Ricci tensor does not depend on the choice of orthonormal basis, and in the particular case of surfaces it simplifies to $Ric_{x}(X,Y)=K_{x}\langle X,Y\rangle$, where $K$ is the  Gauss curvature.

We denote by $d(x,y)$ the Riemannian distance between $x$ and $y$.

A geodesic on $M$ is a smooth curve $\gamma:[a,b]\rightarrow M$ such that $\ddot{\gamma}(s)=0$ for each $s\in[a,b]$, where the dot represents the covariant derivative along $\gamma$.  Throughout the paper we assume that the geodesics are running at unit speed.    For a point $x\in M$, we define $C_{x}$ to be the cutlocus of $x$, that is the set of points $y\in M$ for which the extension (beyond $x$ or $y$) of the minimizing geodesic between $x$ and $y$ ceases to be minimizing.  We will also use the notation $Cut\subset M\times M$, defined as the set of all points $(x,y)$ which are at each other's cut-locus. For points $x,y\in M$ which are not at each other's cut-locus, we define $\gamma_{x,y}$ to be the unique unit speed minimizing curve joining $x$ and $y$.

The injectivity radius is the smallest number $i(M)$ such that any point $x\in M$, the exponential map at $x$ is a diffeomorphism on the ball of radius $i(M)$ in the tangent space $T_{x}M$.

Given a geodesic $\gamma$, a Jacobi field along $\gamma$ is a vector field $J(s)$ such that
\begin{equation}\label{e0:1}
\ddot{J}(s)+R_{\gamma(s)}(J(s),\dot{\gamma}(s))\dot{\gamma}(s)=0,
\end{equation}
where the dot represents  the derivative along $\gamma$.

Given a vector field $V$ along a geodesic $\gamma$ defined on $[a,b]$, the index form $\mathcal{I}$ associated to it is defined as
\begin{equation}\label{e0:2}
\mathcal{I}(V,V)=\int_{a}^{b}(|\dot{V}(s)|^{2}-\langle R_{\gamma(s)}(V(s),\dot{\gamma}(s))\dot{\gamma}(s),V(s) \rangle) ds,
\end{equation}
and using polarization $\mathcal{I}$ can be extended to a bilinear form on the space of vector fields along the geodesic $\gamma$.  In the particular case when $J$ is a Jacobi field, an integration by parts formula shows that
\begin{equation}\label{e0:3}
\mathcal{I}(J,J)=\langle \dot{J}(b), J(b) \rangle-\langle \dot{J}(a),J(a) \rangle
\end{equation}
where $[a,b]$ is the definition interval of $\gamma$.

A manifold has constant curvature $r$ if the sectional curvature is $r$ for all choices of the two dimensional plane, that is $\langle R_{x}(X,Y)Y,X\rangle =r$ for any $x\in M$ and any orthogonal unit vectors $X,Y\in T_{x}M$.   In this case the Ricci curvature simplifies as well as the Jacobi field equation \eqref{e0:1}.   We record here the calculation, as it will be used later on.   Assume that $\gamma_{x,y}$ is the minimal geodesic between the points $x,y\in M$ which are not at each other's cut-locus, $\rho=d(x,y)$  and let $\xi\in T_{x}M$ and $\eta\in T_{y}M$ be two unit vectors.  Consider $\xi(s)$ the extension of $\xi$ by parallel transport along $\gamma$ from $x$ to $y$, and similarly let $\eta(s)$ be the extension of $\eta$ by parallel transport from $y$ to $x$.  The Jacobi field  $J_{\xi,\eta}$ whose value at $x$ is $\xi$ and $\eta$ at $y$ with $\xi$ and $\eta$ orthogonal to $\gamma$, can be computed as follows
\begin{equation}\label{e0:4}
J_{\xi,\eta}(s)=w_{1}(s)\xi(s)+ w_{2}(s)\eta(s)
\end{equation}
where $w_{1}$, $w_{2}$ solve the boundary value problems
\[
\begin{cases}
\ddot{w}_{1}+rw_{1}=0 \\
w_{1}(0)=1\\
w_{1}(\rho)=0
\end{cases}
\text{ and }\quad
\begin{cases}
\ddot{w}_{2}+rw_{2}=0 \\
w_{2}(0)=0\\
w_{2}(\rho)=1
\end{cases},
\]
whose solutions are
\begin{equation}\label{e0:5}
w_{1}(s)=\begin{cases} \frac{\sin(\sqrt{r}(\rho-s))}{\sin(\sqrt{r}\rho)} , & r\ne0 \\ \frac{\rho-s}{\rho}, & r=0\end{cases}\quad\text{ and }\quad w_{2}(s)=\begin{cases} \frac{\sin(\sqrt{r} s)}{\sin(\sqrt{r}\rho)},  & r\ne0 \\ \frac{s}{\rho}, & r=0 \end{cases}.
 \end{equation}

Next, we introduce the main notions regarding couplings.  Recall that in general by a coupling we understand a pair of processes $%
\left( X_{t},Y_{t}\right) $ defined on the same probability space, which are separately Markov, that is%
\[
\begin{split}
P\left( \left. X_{s+t}\in A\right\vert X_{s}=z,X_{u}:0\leq u\leq s\right)
&=P^{z}\left( X_{t}\in A\right) \\
P\left( \left. Y_{s+t}\in A\right\vert Y_{s}=z,Y_{u}:0\leq u\leq s\right)
&=P^{z}\left( Y_{t}\in A\right)
\end{split}
\]
for any measurable set $A$ in the state space of the processes.

The notion of \emph{Markovian coupling} as used in \cite{Burdzy-Benjamini} requires that in addition to the above, the joint process $(X_{t},Y_{t})$ is Markov and
\begin{equation}\label{eq:adapt}
\begin{split}
P\left( \left. X_{s+t}\in A\right\vert X_{s}=z,X_{u},Y_{u}:0\leq u\leq
s\right) &=P^{z}\left( X_{t}\in A\right) \\
P\left( \left. Y_{s+t}\in A\right\vert Y_{s}=z,X_{u},Y_{u}:0\leq u\leq
s\right) &=P^{z}\left( Y_{t}\in A\right)
\end{split}
\end{equation}
for any measurable set $A$ in the state space of the processes.

The notion of \emph{co-adapted coupling} (introduced by Kendall, \cite{Kendall}) is the same as the above but without the Markov property of $(X_{t},Y_{t})$.

The Markovian couplings are easily obtained for instance in the case when the process $(X_{t},Y_{t})$ is actually a diffusion on the manifold.  This would be the ideal case, but we still get a Markovian coupling if we patch together diffusion processes in a nice way.   For example this will be the case of the main construction on manifolds, where we start the coupling following a diffusion up to a certain stopping time, then, from the point it stopped we run it independently according to another diffusion and then stop this at another stopping time and so on.  We do this quietly without further details.

\section{Shy and Fixed-Distance Couplings on Riemannian Manifolds}\label{s:6}

In this section we prove a general result about the existence of shy coupling on Riemannian manifolds.  Before we launch into various technical details, we state the main result of this section.

\begin{theorem}\label{t:7}  Let $M$ be a complete $d$-dimensional Riemannian manifold, $d\ge2$, with positive injectivity radius and such that for some  real number $k$:
\begin{equation}\label{e7:0}
k\le Ric_{x}  \text{ for all } x\in M \text{ and }   \sup_{x\in M}K_{x}<\infty,
\end{equation}
where $Ric$ is the Ricci tensor and $K_{x}$ stands for the maximum of the sectional curvatures at $x\in M$.
\begin{enumerate}
\item For $k<0$, there exists $\epsilon,\delta>0$ such that for any points $x_{0},y_{0}\in M$ with $d(x_{0},y_{0})<\epsilon$ we can find a Markovian coupling of Brownian motions $X_{t},Y_{t}$ starting at $x_{0},y_{0}$ such that $d(X_{t},Y_{t})\ge d(x_{0},y_{0})$ for all $t\ge0$ and $d(X_{t},Y_{t})=e^{-kt/2}d(x_{0},y_{0})$ for $0\le t\le \delta$.

\item If $k\ge 0$, there exists $\epsilon>0$ such that for any $x_{0},y_{0}\in M$ with $d(x_{0},y_{0})<\epsilon$, there exists a Markovian coupling of Brownian motions $X_{t},Y_{t}$ starting at $x_{0},y_{0}$ such that
\[
d(X_{t},Y_{t})=e^{-kt/2}d(x_{0},y_{0})\text{ for all }t\ge0.
\]

\end{enumerate}
\end{theorem}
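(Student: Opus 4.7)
I would follow the intrinsic framework laid out in Section~\ref{s:5}: construct the coupling as a solution of the frame-bundle SDE~\eqref{e:5} with a smooth, carefully chosen fibre isometry $O_{x,y}\colon T_xM\to T_yM$. From \eqref{e4:1} the martingale part of $\rho_t=d(X_t,Y_t)$ vanishes precisely when $O_{x,y}$ preserves the tangential/normal decomposition along the minimizing geodesic $\gamma_{x,y}$, so the process $\rho_t$ degenerates to an ODE whose right-hand side is $\tfrac12\mathcal{L}\rho$, computable via the index-form identity~\eqref{e:8}. The whole game is then to tune $O_{x,y}$ so that this drift matches the desired target: $0$ for the fixed-distance coupling ($k\ge 0$), $-\tfrac{k}{2}\rho$ for exponential decay ($k>0$), and the strictly positive target $-\tfrac{k}{2}\rho$ for the shy coupling ($k<0$).

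On the normal complement $T_{xy}^\perp$ I would write $O_{x,y}=\tau_{x,y}\circ R_\theta$, with $\tau_{x,y}$ parallel transport along $\gamma_{x,y}$ and $R_\theta$ an orthogonal transformation. When $d-1$ is even, split $T_{xy}^\perp$ into orthogonal $2$-planes and let $R_\theta$ rotate each by a common angle $\theta$. The resulting drift $D(\theta,\rho)$ depends smoothly on $\theta$ and, by an index-form/Jacobi comparison, is monotone in $\theta\in[0,\pi]$. At $\theta=0$ (synchronous coupling) the Renesse--Sturm estimate combined with $\mathrm{Ric}\ge k$ gives $D(0,\rho)\le -\tfrac{k}{2}\rho+O(\rho^2)$; at $\theta=\pi$ (perverse coupling) the leading term is $D(\pi,\rho)\sim 2(d-1)/\rho$, large and positive for small $\rho$ (the upper bound on sectional curvature is used here to control the error in the Jacobi comparison on both sides). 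The intermediate value theorem then supplies a unique smooth angle $\theta(\rho)$ achieving any prescribed drift value strictly between these extremes. Plugging this $\theta(\rho)$ into~\eqref{e:5} produces the three couplings: for $k<0$ the target $-\tfrac{k}{2}\rho$ lies in the achievable range only while $\rho<\epsilon$, giving the exact ODE on $[0,\delta]$, after which one resets $\theta$ to a value where $D\ge 0$ so that $\rho_t\ge \rho_0$ persists.

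The main obstacle is the parity issue: when $d$ is even, $T_{xy}^\perp$ has odd dimension $d-1$ and cannot be split into $2$-planes in any continuous way (this is the topological obstruction to sectioning $V_{k,n}\to V_{2,n}$ alluded to in the remark after Theorem~\ref{t:5}). The fix, announced in the introduction, is to enlarge the frame bundle to one of $N$-frames with $N=d+1$: embed $T_xM$ as a hyperplane in $\R^{d+1}$, so that the extra ambient direction pairs with the leftover vector of $T_{xy}^\perp$ into one additional $2$-plane, and the same $R_\theta$ construction applies. One must check that this lift is smooth in $(x,y)$ off the cut locus, that each marginal projects to a genuine Brownian motion on $M$ (the extra driving noise becoming a spectator after projection), that the identities \eqref{e4:1} and \eqref{e:8} still hold with the spectator plane contributing nothing to the drift, and that the joint process satisfies the Markovian coupling condition~\eqref{eq:adapt}. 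I expect this step to absorb most of the technical work.

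Finally, all the above is local on $\{d(x,y)<\epsilon\}\setminus\mathrm{Cut}$, for $\epsilon$ smaller than a fraction of $i(M)$ and small enough that $\theta(\rho)$ stays smoothly in $(0,\pi)$. The global Markovian coupling is then obtained by the \emph{extension beyond the cut-locus} patching from Section~\ref{s:5}: run the coupled dynamics until the joint process enters an $\epsilon$-collar of $\mathrm{Cut}$ (or of the boundary of the allowed region in the shy case), switch to independent Brownian motions until the joint process leaves a $2\epsilon$-collar, and resume. The bounded-geometry hypotheses \eqref{e7:0} prevent explosion and ensure that the time spent in the patching regime is controlled. For $k\ge 0$ the coupled dynamics force $\rho_t$ to stay constant or decay, so the joint process never leaves the local regime at all; for $k<0$, the patching keeps $\rho_t\ge\rho_0$ for all $t\ge 0$, completing the shy coupling.
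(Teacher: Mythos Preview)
Your overall architecture matches the paper's: frame-bundle SDE, block rotations by a common angle on the normal complement, $N$-frames with $N=d+1$ for the even-dimensional parity obstruction, and a patching argument for global existence. Two concrete points, however, do not go through as you state them.

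First, the drift $D(\theta,\cdot)$ is \emph{not} monotone in $\theta$ in general, and it depends on $(x,y)$, not just on $\rho$. Writing the drift via \eqref{e7:30}--\eqref{e7b:3}, the trace term under your block rotation becomes $\cos\theta\,\mathrm{tr}(\Lambda_{x,y})+\sin\theta\,F_{x,y}$, where $F_{x,y}$ is built from the off-diagonal index forms $\mathcal{I}(J_{1,j},J_{2,k})$. In variable curvature $\Lambda_{x,y}$ need not be symmetric in the chosen frame, so $F_{x,y}\neq 0$ and $\theta\mapsto D(\theta,x,y)$ has a genuine critical point inside $(0,\pi)$; your IVT then gives existence but neither uniqueness nor smooth dependence of $\theta$ on $(x,y)$. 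The paper bypasses this by solving $a\cos\alpha+b\sin\alpha=c$ explicitly (an arccos formula), after establishing via Lemma~\ref{l:1} and Lemma~\ref{l:2} that $a\le c<0$ for small $\rho$; this yields a smooth branch $\alpha=\alpha(x,y)$ directly.

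Second, the patching you describe (near the cut locus, in the style of Section~\ref{s:5}) is never triggered here: for $\epsilon$ below the injectivity radius and $k\ge 0$ the processes stay at distance $\le\rho_0<\epsilon$, far from $\mathrm{Cut}$. The actual obstruction to a global construction is that your $2$-plane splitting of $T_{xy}^\perp$ (equivalently, the local orthonormal frame $E_1,\dots,E_d$ with $E_1=\dot\gamma_{x,y}$) exists only on a product of small balls $B(x_0,\eta)\times B(y_0,\eta)$, not globally on $\{0<d(x,y)<\epsilon\}$. The paper therefore patches by restarting the local recipe each time $(X_t,Y_t)$ exits such a product chart, and uses a Borel--Cantelli argument (via uniform lower bounds on exit times from small balls, available from the Ricci lower bound) to prove $\sum_n\tau_n=\infty$, separately for $k<0$ and $k\ge 0$. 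Your sketch should replace the cut-locus patching by this chart-wise patching and the accompanying non-explosion argument. (Two minor points: the ``spectator'' direction in the $(d{+}1)$-frame is paired with one normal vector and is rotated along with it, so it does enter the drift computation; and the angle is $\theta(x,y)$, not $\theta(\rho)$.)
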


We will deduce this theorem as a particular case of the following more general result.

\begin{theorem}\label{t:100}  Assume the same geometric conditions as in Theorem~\ref{t:7}, namely, $d\ge2$, positive injectivity radius and \eqref{e7:0}.  Let $0<b\le \infty$ and $F:(0,b)\to\R$ be a smooth function such that for some $0\le a<d-1$,
\begin{equation}
-\frac{2a}{\rho}\le F(\rho) \le k\rho.
\end{equation}

\begin{enumerate}
\item There exist positive constants $\epsilon,\delta>0$ such that for any points $x_{0},y_{0}\in M$, with $d(x_{0},y_{0})\le \epsilon$, we can find a Markovian coupling of Brownian motions $X_{t},Y_{t}$ such that $X_{0}=x_{0}$, $Y_{0}=y_{0}$ and $\rho_{t}=d(X_{t},Y_{t})$ satisfies
\begin{equation}\label{e10:800}
\frac{d\rho_{t}}{dt}= -\frac{1}{2}F(\rho_{t}) \text{ with }\rho_{0}=d(x_{0},y_{0})
\end{equation}
for $t\in[0,\delta)$.

 \item Moreover, for $k<0$, we can actually take $\epsilon$ and $\delta$ to be small enough and extend this  coupling for all $t\ge0$ such that $d(X_{t},Y_{t})\ge\rho_{0}$.

\item In the case $k\ge0$, if in addition we have $0\le F(\rho)$, then we can find a small $\epsilon>0$ such that for any points $x_{0},y_{0}$ with $d(x_{0},y_{0})\le \epsilon$, there is a Markovian coupling of Brownian motions $X_{t},Y_{t}$ with $X_{0}=x_{0}$ and $Y_{0}=y_{0}$ such that $\rho_{t}=d(X_{t},Y_{t})$ satisfies \eqref{e10:800} for all $t\ge0$.
\end{enumerate}
\end{theorem}

What this theorem says is that we can obtain couplings where the distance function $\rho_{t}=d(X_{t},Y_{t})$ satisfies a prescribed differential equation in the form of \eqref{e10:800} (at least for short time). For instance, Theorem~\ref{t:7} is obtained simply for the case of $F(\rho)=k\rho$.

We point out that given $F$ as in the theorem, for $\rho_{0}$ small enough, there is a solution to \eqref{e10:800} for small time $t_{0}$.  In fact, one can actually estimate the time $t_{0}$ from the fact that $F(\rho)\ge-\frac{2a}{\rho}$, we obtain that $\rho^{2}_{t}\le \rho_{0}^{2}+2at$.  Therefore as long as $\rho_{0}$ is small enough and $t_{0}$ is also small enough, $\rho_{t}<b$ and thus the solution does not exit the domain of definition of $F$.  On the other hand, $F(\rho)\le k\rho$, gives that $\rho_{t}\ge\rho_{0}e^{-kt/2}$ for as long as the solution is defined, therefore, the solution $\rho_{t}$ does not hit $0$.  Therefore as long as the initial condition is small enough, say $\rho_{0}<b/4$ and $t<b^{2}/(8(a+1))$, the solution is well defined and it is also unique.

The plan of the proof is as follows.   First we set up an extension of the orthonormal frame bundle (which will be used in the case of even dimensional manifolds).  Then we define the equation of the coupling at the level of this frame bundle and we seek a local solution.   Once we show the local existence of the coupling, we use patching in order to prove the global existence of the coupling.

We split the proof into several subsections.

\subsection{$N$-frames and the associated bundle}

One of the constructions of the Brownian motion on a $d$-dimensional Riemannian manifold  uses the notion of orthonormal frame bundle.  We first  extend this notion by introducing the following.

\begin{definition}  Let $N\ge d$ be an integer number.
An $N$-frame $U$ in $T_{x}M$ is a map $U:\R^{N}\to T_{x}M$ such that $UU'=Id$.    Alternatively, $U$ is an $N$-frame at $T_{x}M$ if the map $U'$ is an isometric imbedding of $T_{x}M$ into $\R^{N}$.
\end{definition}

In this small subsection, to avoid confusion, we will use the notation of $\langle \cdot,\cdot\rangle_{T_{x}M}$ to denote the inner product in $T_{x}M$, while $\langle \cdot,\cdot\rangle_{\R^{N}}$ will denote the inner product in $\R^{N}$.

Abusing the language we often say that $U$ is an $N$-frame at $x$ rather than in $T_{x}M$.   Another way of describing $U$ is via the vectors $X_{i}=Ue_{i}$, $i=1\dots N$, where $e_{i}$ are the standard basis vectors in $\R^{N}$.  The condition that $U$ is an $N$-frame is actually equivalent to the condition that
\begin{equation}\label{e7:1}
\sum_{i=1}^{N}\langle \xi , X_{i}\rangle_{T_{x}M} X_{i} =\xi \text{ for all }\xi\in T_{x}M.
\end{equation}
Indeed, if $X_{i}=U e_{i}$, then for any $\xi\in T_{x}M$,  $\sum_{i=1}^{N}\langle \xi , X_{i}\rangle_{T_{x}M} X_{i} =U\sum_{i=1}^{N}\langle U'\xi , e_{i}\rangle_{\R^{N}} e_{i} =UU'\xi=\xi$.
Conversely, condition \eqref{e7:1} determines an $N$-frame $U:\R^{N}\to T_{x}M$ by prescribing
\[
U\eta=\sum_{i=1}^{N}\langle \eta , e_{i}\rangle_{\R^{N}} X_{i},
\]
noting that $U'\xi=\sum_{i=1}^{N}\langle \xi,X_{i} \rangle_{T_{x}M} e_{i}$,  which under \eqref{e7:1} gives $UU'=Id$, as needed.

Hence we have different characterizations of an $N$-frame,  as a projection, as an isometric embedding  and as a set of vectors $Ue_{i}$.

Given two points $x,y\in M$, an $N$-frame $\{X_{i}\}_{i=1}^{N}$ at $x$, and an isometry $A:T_{x}M\to T_{y}M$.  Then $\{AX_{i}\}_{i=1}^{N}$ is certainly an $N$-frame at $y$ because $
\sum_{i=1}^{N}\langle \xi , AX_{i}\rangle AX_{i} =A\sum_{i=1}^{N}\langle A'\xi , X_{i}\rangle X_{i}=AA'\xi=\xi.$

Also, it is easy to see that if $O$ is an orthogonal transformation of $\R^{N}$ and $U$ is an $N$-frame, then $UO$ is also an $N$-frame. As in the standard case of the orthonormal bundle, it is clear that $\mathcal{O}(M)$ is a smooth bundle over $M$ and $\pi :\mathcal{O}(M)\to M$ which assigns to each $N$-frame $U$ in $T_{x}M$ its base point $x$ (i.e. $\pi U=x$) is a smooth map.  In the terminology of differential geometry,  $\mathcal{O}(M)$ is actually a fiber bundle with the fiber being the Stiefel manifold $V_{d,N}$ constructed from the trivial principal bundle $M\times O(N)$ over $M$.

For each fixed $N$-frame $U$  at $x\in M$, the tangent space $T_{U}\mathcal{O}(M)$ splits into the horizontal part $T_{U}^{H}\mathcal{O}(M)$ obtained by lifting tangent vectors from $T_{x}M$ and the vertical part $T_{U}^{V}\mathcal{O}(M)$ which contains a special class of tangent vectors obtained by differentiating curves which are determined by the action of $O(N)$ in the fiber.  For references the reader can consult \cite{Elton} or  \cite{Stroock2} (the discussion there is intended for the orthonormal frame bundle, but nevertheless most of it extends naturally to this context).

Now, we define the fundamental vector fields $H_{i}$ on $\mathcal{O}(M)$ by the prescription that at each $U$, $(H_{i})_{U}$ is the lift of the vector $Ue_{i}$ from $T_{\pi U}M$.
The main property here is that the associated Bochner Laplacian
\[
\Delta_{B}=\sum_{i=1}^{N}H_{i}^{2}
\]
projects down onto $M$ as the Laplace operator.  The proof is as in \cite[Section 8.1.3]{Stroock2}, and for simplicity we just point out the main difference.   For a vector $\xi\in\R^{N}$, let $(H_{\xi})_{U}$ be the horizontal lift of $U\xi$ at $U$.   Then with the same proof as \cite[Equation 8.30]{Stroock2}, for any smooth function $f$ on $M$ we have
\[
(H_{\xi})_{U}\circ H_{\eta}(f\circ\pi)=\langle (\mathrm{Hess}f)_{\pi U}U\xi,  U\eta \rangle,
\]
where $\mathrm{Hess}f$ is the Hessian of $f$ on $M$.  Once this is established, we can continue with
\[
\begin{split}
\sum_{i=1}^{N}(H_{i})_{U}H_{i}(f\circ \pi)&=\sum_{i=1}^{N}\langle (\mathrm{Hess}f)_{\pi U}Ue_{i}, Ue_{i} \rangle=\sum_{i=1}^{N}\langle U'(\mathrm{Hess}f)_{\pi U}Ue_{i}, e_{i} \rangle \\
&=\tr(U'(\mathrm{Hess}f)_{\pi U}U)=\tr((\mathrm{Hess}f)_{\pi U}U\,U')=\tr((\mathrm{Hess}f)_{\pi U})\\ &=(\Delta_{M}f)(\pi U),
\end{split}
\]
where we used that the Laplacian on $M$ is simply the trace of the Hessian.   Thus
\begin{equation}\label{e7:2}
\pi_{*}\Delta_{B}=\Delta_{M}.
\end{equation}

Under the assumptions in \eqref{e7:0}, the Ricci curvature is bounded from below and from this we learn that  the Brownian motion on $M$ does not explode.  Thus the Brownian motion constructed on $\mathcal{O}(M)$ (more appropriately the solution to the martingale problem for $\Delta_{B}$) projects down into the Brownian motion on $M$ and exists for all times.

\subsection{The Coupling SDE}

Now we want to couple Brownian motions on $M$, and for this matter we consider couplings of the form described below.  Namely, for given points $x_{0}, y_{0} \in M$ and $N$-frames $U_{0}$ at $x_{0}$ and $V_{0}$ at $y_{0}$, consider the system
\begin{equation}\label{e:5b}
\begin{cases}
dU_{t}=\sum_{i=1}^{N}H_{i}(U_{t})\circ dW_{t}^{i}\\
dV_{t}=\sum_{i=1}^{N}H_{i}(V_{t})\circ dB_{t}^{i}\\
dB_{t}=O_{U_{t},V_{t}}dW_{t}\\
X_{t}=\pi U_{t}\\
Y_{t}=\pi V_{t}.
\end{cases}
\end{equation}
Here $W_{t}$ is an $N$-dimensional Brownian motion and $O_{U,V}$ is an orthogonal $N\times N$ matrix which depends smoothly on $U,V$, at least on a subset of $\mathcal{O}(M)\times\mathcal{O}(M)$ which will be specified later on.    This insures that $B_{t}$ is also an $N$-dimensional Brownian motion.   We do not impose additional conditions on the matrix $O_{U_{t},V_{t}}$ yet.

The same arguments as in \cite[Section 6.5]{Elton} show that the generator of the diffusion $(U_{t},V_{t})$ is given by
\[
\Delta^{c}=\Delta_{B,1}+\Delta_{B,2}+2\sum_{i=1}^{N}H_{e_{i}^{*},2}H_{i,1}
\]
where the subscript $1$ or $2$ represents the action with respect to the first or the second variable, and
$e_{i}^{*}=O_{U,V}e_{i}$.

Let $\rho_{t}=d(X_{t},Y_{t})$ be the distance between the processes $X_{t}$ and $Y_{t}$.  Also let  $\widetilde{d}(U,V)=d(\pi U,\pi V)$ be the lift of the distance function from $M$ into $\mathcal{O}(M)$.   Using It\^o's formula we have that
\begin{equation}\label{e7:3}
d\rho_{t}=\left(( H_{i,1}+H_{e^{*}_{i},2} )\widetilde{d}\right)(U_{t},V_{t})dW_{t}+\frac{1}{2}\left(\Delta^{c}\widetilde{d}\right)(U_{t},V_{t})dt,
\end{equation}
which is certainly valid in the region where $\pi U_{t}$ and $\pi V_{t}$ are not at each other's cut-locus.  Thus in order to have the distance function $\rho_{t}$ satisfy
\[
d\rho_{t}=-\frac{1}{2}F(\rho_{t})dt
\]
we need to cancel the martingale part, which is $\left(( H_{i,1}+H_{e^{*}_{i},2} )\widetilde{d}\right)(U_{t},V_{t})dW_{t}$ and also force the bounded variation part to be equal to $F(\rho_{t})dt$.

For the martingale part, notice that the first variation formula gives
\[
( H_{i,1}+H_{e^{*}_{i},2} )\widetilde{d}(U,V)=\langle VO_{U,V}e_{i},\dot{\gamma}_{X,Y}  \rangle_{\pi V}-\langle Ue_{i},\dot{\gamma}_{X,Y}  \rangle_{\pi U},
\]
where $X=\pi U$, $Y=\pi V$, and $\gamma_{X,Y}$ is the minimizing geodesic joining $X$ to $Y$, run at unit speed.  The bounded variation part comes from the second variation formula and produces
\begin{equation}\label{e7:5}
(\Delta^{c}\widetilde{d})(U,V)=\sum_{i=1}^{N}\mathcal{I}(J_{i},J_{i}),
\end{equation}
where $J_{i}$ is the Jacobi field along the geodesic joining $\pi U$ to $\pi V$, with values  $Ue_{i}$, $VO_{U,V}e_{i}$ at the endpoints.

In order to cancel the martingale part from \eqref{e7:3}, we need to impose the condition
\[
\langle Ue_{i},\dot{\gamma}_{X,Y}  \rangle_{\pi V}-\langle VO_{U,V}e_{i},\dot{\gamma}_{X,Y}  \rangle_{\pi U}=0,
\]
and for the bounded variation part, we need to have
\[
\sum_{i=1}^{N}\mathcal{I}(J_{i},J_{i})=-F(\tilde{\rho}).
\]

\subsection{Local Construction}  This part of the proof consists in showing that there exists $\eta>0$ sufficiently small such that for any $x,y\in M$ with $d(x,y)<\eta$ there is a smooth choice of $O_{U,V}$ on $\mathcal{N}_{\eta}(x,y)=\pi^{-1}(B(x,\eta))\times \pi^{-1}(B(y,\eta))$ for which
\begin{equation}\label{e7:15}
\langle Ue_{i},\dot{\gamma}_{\pi U,\pi V}  \rangle_{\pi U}-\langle VO_{U,V}e_{i},\dot{\gamma}_{\pi U,\pi V}  \rangle_{\pi V}=0 \text{ for }(U,V)\in \mathcal{N}_{\eta}(x,y)
\end{equation}
and
\begin{equation}\label{e7:16}
\sum_{i=1}^{N}\mathcal{I}(J_{i},J_{i})=-F(d(x,y)), \text{ for } (U,V)\in \mathcal{N}_{\eta}(x,y),
\end{equation}
where $J_{i}$ are the Jacobi fields with boundary values $Ue_{i}$ and $V O_{U,V}e_{i}$ at the endpoints of the minimizing geodesic joining $\pi U$ and $\pi V$. Note here that for small $\eta$, there is a unique minimizing geodesic joining $\pi U$ and $\pi V$, so everything is well defined in this case.

Take $\eta<i(M)/3$, where $i(M)$ is the  injectivity radius of $M$. In fact we are going to choose possibly smaller values of $\eta$ later in the construction, but for now assume that it is smaller than $i(M)/3$.

Now, assume that $x_{0},y_{0}\in M$ are two fixed starting points with distance $d(x_{0},y_{0})<\eta$.  We will construct the coupling $(U_{t},V_{t})$ in $\mathcal{N}_{\eta}(x_{0},y_{0})$.

 We can choose an orthonormal basis $E_{1},E_{2},\dots, E_{d}$ at $x$ such that $E_{1}=\dot{\gamma}_{x,y}(0)$ and such that each $E_{j}$ depends smoothly on $(x,y)\in B(x_{0},\eta)\times B(y_{0},\eta)$.  We can extend this basis $E_{1},\dots, E_{d}$ along $\gamma_{x,y}$ and continue to call it $E_{1},\dots, E_{d}$.  Now,  condition \eqref{e7:15} becomes
\begin{equation}\label{e7:31}
U'\dot{\gamma}_{x,y}=O_{U,V}'V'\dot{\gamma}_{x,y}.
\end{equation}
Next, let  us denote $J_{1,j}$ the Jacobi field along the minimizing geodesic joining $\pi U$ to $\pi V$ such that it equals $E_{j}$ at $\pi  U$ and $0$ at $\pi V$.  Similarly let $J_{2,j}$ be the Jacobi field which is $0$ at $\pi U$ and $E_{j}$ at $\pi V$.  Then, since
\[
J_{i}=\sum_{j=1}^{d}\langle Ue_{i},E_{j}\rangle J_{1,j}+\sum_{j=1}^{d}\langle VO_{U,V}e_{i},E_{j}\rangle J_{2,j}
\]
it follows that
 \begin{equation}\label{e7:30}
\sum_{i=1}^{N}\mathcal{I}(J_{i},J_{i})= \sum_{j=2}^{d}\mathcal{I}(J_{1,j},J_{1,j})+ \sum_{j=2}^{d}\mathcal{I}(J_{2,j},J_{2,j})+2\sum_{j,k=2}^{d}\langle O_{U,V}'V'E_{j},U'E_{k}\rangle\mathcal{I}(J_{1,j},J_{2,k}).
 \end{equation}
 The expression given by the last sum can be simplified as follows.   Let $\tau_{x,y}$ stand for the parallel transport map from $T_{x}M$ to $T_{y}M$ along the minimizing geodesic $\gamma_{x,y}$.   Consider the bilinear map $\Lambda_{x,y}:T_{x}M\times T_{x}M\to \R$ defined by
 \[
 \Lambda_{x,y}(\xi,\eta)=\mathcal{I}(J_{1,\xi},J_{2,\eta}),
 \]
 where $J_{1,\xi}$ is the Jacobi field along $\gamma_{x,y}$ which is $\xi$ at $x$ and $0$ at $y$, and $J_{2,\eta}$ is $0$ at $x$ and $\tau_{x,y}\eta$ at $y$.  Another way of looking at this is as a linear map from $T_{x}M$ into itself, map which we still call $\Lambda_{x,y}$.    We can see this map also as a linear transformation preserving the orthogonal to $\dot{\gamma}_{x,y}$ at $x$ and we will denote this restriction also by $\Lambda_{x,y}$.    In fact, the actions of $\Lambda_{x,y}$ and its transpose on $\dot{\gamma}_{x,y}$ are zero.

 With this notation, it is not hard to see that for $N$-frames $U$ and $V$ at $x$, respectively at $y$, we have
 \begin{equation}\label{e7b:3}
 \sum_{j,k=2}^{d}\langle O_{U,V}'V'E_{j},U'E_{k}\rangle\mathcal{I}(J_{1,j},J_{2,k})=\tr(U O_{U,V}'V'\tau_{x,y} \Lambda_{x,y}).
 \end{equation}

 For the first part of the theorem we want to find a map $O_{U,V}$ such that \eqref{e7:15} is satisfied which is equivalent to
 \begin{equation}\label{e7:100:2}
 U'\dot{\gamma}_{x,y}=O_{U,V}'V'\dot{\gamma}_{x,y}
 \end{equation}
 In addition we want to fulfill \eqref{e7:16} which is the same as asking that quantity in \eqref{e7:30} equals $-F(d(x,y))$.  Therefore equation \eqref{e7:16} becomes in this reformulation
 \begin{equation}\label{e7:100}
  \tr(U O_{U,V}'V'\tau_{x,y} \Lambda_{x,y})=-\frac{1}{2}\left(\sum_{j=2}^{d}\mathcal{I}(J_{1,j},J_{1,j})+ \sum_{j=2}^{d}\mathcal{I}(J_{2,j},J_{2,j}) +F(\rho) \right)
 \end{equation}
where for simplicity of notations, we are going to denote $d(x,y)=\rho$.

 To carry this task through, we are going to use the following standard comparison result, whose proof can be found for instance in \cite[pp. 216-217]{DoC}.

\begin{lemma}\label{l:2}

Assume that $M$ and $\widetilde{M}$ are two manifolds and $\gamma$, $\widetilde{\gamma}$ are two normalized geodesics defined on $[0,\rho]$ such that $\widetilde{\gamma}$ does not have conjugate points.   Assume that $J_{t}$ and $\widetilde{J}_{t}$ are two Jacobi vector fields along $\gamma$, respectively $\widetilde{\gamma}$,  such that $J_{0}=\widetilde{J}_{0}=0$, $|J_{\rho}|=|\widetilde{J}_{\rho}|$, $\langle \dot{J}_{0},\dot{\gamma}(0)\rangle= \langle \dot{\tilde{J}}_{0},\dot{\tilde{\gamma}}(0)\rangle$ and
\[
K^{+}(\gamma(t))\le\widetilde{K}^{-}(\widetilde{\gamma}(t)),
\]
where $K^{+}(x)$ is the maximum of the sectional curvature at $x$ and $\widetilde{K}^{-}(\widetilde{x})$ is the minimum of the sectional curvature at $\widetilde{x}$. Then we have
\begin{equation}\label{e7:9}
\mathcal{I}(\widetilde{J},\widetilde{J})\le \mathcal{I}(J,J).
\end{equation}

\end{lemma}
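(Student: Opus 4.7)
The plan is to prove Lemma~\ref{l:2} by the classical Rauch-type argument, combining the pointwise curvature comparison $K^{+}\le\widetilde K^{-}$ with the minimality of Jacobi fields (in the absence of conjugate points) among vector fields with prescribed endpoint data. In outline, I would construct a transported field $\widetilde V$ along $\widetilde\gamma$ from $J$, bound $\widetilde{\mathcal I}(\widetilde V,\widetilde V)\le \mathcal I(J,J)$ via the curvature comparison, and then bound $\widetilde{\mathcal I}(\widetilde J,\widetilde J)\le\widetilde{\mathcal I}(\widetilde V,\widetilde V)$ via the minimality of $\widetilde J$.

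For the transport step, choose parallel orthonormal frames $\{e_i(t)\}$ along $\gamma$ with $e_1=\dot\gamma$ and $\{\widetilde e_i(t)\}$ along $\widetilde\gamma$ with $\widetilde e_1=\dot{\widetilde\gamma}$. Writing $J(t)=\sum_i a_i(t)e_i(t)$, define $\widetilde V(t)=\sum_i a_i(t)\widetilde e_i(t)$. Parallel transport preserves inner products, so $|\widetilde V(t)|=|J(t)|$, $|\widetilde V'(t)|=|J'(t)|$, and the tangential and normal decompositions transfer pointwise. A rotation of the initial frame at $\widetilde\gamma(0)$ inside $\dot{\widetilde\gamma}(0)^{\perp}$ is chosen so that $\widetilde V(\rho)=\widetilde J(\rho)$; this is where the endpoint hypothesis $|J_{\rho}|=|\widetilde J_{\rho}|$ enters, together with the common vanishing at $0$. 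The pointwise curvature bound then yields
\[
\langle R_{\gamma(t)}(J,\dot\gamma)\dot\gamma,J\rangle \;\le\; K^{+}(\gamma(t))\,|J^{\perp}(t)|^{2} \;\le\; \widetilde K^{-}(\widetilde\gamma(t))\,|\widetilde V^{\perp}(t)|^{2} \;\le\; \langle \widetilde R_{\widetilde\gamma(t)}(\widetilde V,\dot{\widetilde\gamma})\dot{\widetilde\gamma},\widetilde V\rangle,
\]
and integrating together with the equality $\int|J'|^{2}=\int|\widetilde V'|^{2}$ gives $\widetilde{\mathcal I}(\widetilde V,\widetilde V)\le \mathcal I(J,J)$.

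For the minimality step, since $\widetilde\gamma$ carries no conjugate points I apply \eqref{e0:3} to the difference $\widetilde V-\widetilde J$, which vanishes at both endpoints, to see that $\widetilde{\mathcal I}$ is positive semi-definite on the space of fields vanishing at $0$ and $\rho$; polarizing then produces $\widetilde{\mathcal I}(\widetilde V,\widetilde V)-\widetilde{\mathcal I}(\widetilde J,\widetilde J)=\widetilde{\mathcal I}(\widetilde V-\widetilde J,\widetilde V-\widetilde J)\ge 0$, so $\widetilde{\mathcal I}(\widetilde J,\widetilde J)\le \widetilde{\mathcal I}(\widetilde V,\widetilde V)$. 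Chaining the two estimates yields \eqref{e7:9}. The main obstacle I anticipate is the careful book-keeping of the tangential component in the transport step: for a Jacobi field vanishing at $0$ the tangential piece is $\langle J(t),\dot\gamma(t)\rangle=bt$, and when $J$ and $\widetilde J$ carry differing tangential slopes the required matching $\widetilde V(\rho)=\widetilde J(\rho)$ is achieved only after first splitting $J=J^{T}+J^{N}$ and $\widetilde J=\widetilde J^{T}+\widetilde J^{N}$, verifying that the index form splits orthogonally across this decomposition, and applying the Rauch transport separately to the normal parts (for which $|J^{N}(\rho)|=|\widetilde J^{N}(\rho)|$ remains available after handling the tangential contribution by direct computation giving $b^{2}\rho$, depending only on $|J^{T}(\rho)|$). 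This orthogonal decomposition is the technical heart of the argument and is precisely what is carried out in \cite[pp.~216--217]{DoC}.
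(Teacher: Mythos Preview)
The paper does not give its own proof of Lemma~\ref{l:2}: it simply labels the statement a ``standard comparison result'' and cites \cite[pp.~216--217]{DoC}. Your proposal is exactly the classical Rauch argument carried out in that reference---transport $J$ to a field $\widetilde V$ along $\widetilde\gamma$ via parallel frames, compare index forms pointwise via the curvature hypothesis, then invoke the minimality of the Jacobi field $\widetilde J$ among fields with the same endpoint data---so you are in full agreement with the paper's (deferred) approach.

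One small remark on internal referencing: in the minimality step you appeal to \eqref{e0:3}, but that formula is the endpoint expression for $\mathcal I(J,J)$ when $J$ is itself a Jacobi field, and $\widetilde V-\widetilde J$ is generally not Jacobi. What you actually need is the identity $\widetilde{\mathcal I}(\widetilde J,\widetilde V)=\widetilde{\mathcal I}(\widetilde J,\widetilde J)$ (integration by parts plus the Jacobi equation) together with the positivity of the index form on fields vanishing at both endpoints when $\widetilde\gamma$ has no conjugate points. In the paper this is precisely the content of Lemma~\ref{l:1} and the short argument that follows it, so your chain of inequalities is correct once you point to that lemma rather than to \eqref{e0:3}.
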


Since the sectional curvature is bounded from above, $K_{x}\le1/\omega^{2}$ for all $x\in M$ for a small enough $\omega>0$.   With this choice, for points $x,y\in M$ at distance $\rho=d(x,y)<\pi \omega/4$, comparing the index form of the manifold $M$ with the index form of a sphere of radius $r$,  for geodesics of length $\rho<\pi \omega/4$, we obtain
\[
\mathcal{I}(\widetilde{J},\widetilde{J})\le \mathcal{I}(J,J),
\]
where $J,\widetilde{J}$ are as in the Lemma~\ref{l:2}.  On the other hand, for the $d$-dimensional sphere $S^{d}$ we have $\widetilde{J}(s)=w_{2}(s)\widetilde{E}(s)$, where $w_{2}$ is given by \eqref{e0:4} and $\widetilde{E}$ is the parallel transport of $\widetilde{E}_{0}\in T_{\widetilde{\gamma}(0)}S^{d}$ along $\widetilde{\gamma}$. From \eqref{e0:3} and \eqref{e0:5} (notice that the $r$ there is the curvature bound which in our case at hand is $1/\omega^{2}$) we conclude that
\[
\mathcal{I}(\widetilde{J},\widetilde{J})= \dot{w}_{2}(\rho)=\frac{\cot(\rho/\omega)}{\omega}
\]
and consequently, we obtain
\begin{equation}\label{e7b:21}
0<\frac{\cot(\rho/\omega)}{\omega}=\mathcal{I}(\widetilde{J},\widetilde{J})\le \mathcal{I}(J,J).
\end{equation}

We now choose $\eta$ sufficiently small, for instance smaller than $\omega$ above and also less than a third of the injectivity radius of $M$.

Recall that we want to choose $O_{U,V}$ so that  \eqref{e7:100:2} and \eqref{e7:100} are satisfied.

To show this, we recall another standard result in Riemannian geometry as for instance appears in \cite[Corollary 8.10]{Spivak}.

\begin{lemma}\label{l:1}  Assume $\gamma$ is a normalized geodesic on $[0,\rho]$ without conjugate points on it.  If $J$ and $V$ are two vector fields with the same boundary values, and $J$ is also a Jacobi field, then
\begin{equation}\label{e7:8}
\mathcal{I}(J,J)\le \mathcal{I}(V,V).
\end{equation}
\end{lemma}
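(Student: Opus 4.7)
The proof follows the classical Index Lemma argument. Set $W := V - J$, so that $W(0) = W(\rho) = 0$ since $V$ and $J$ share the same boundary values. By bilinearity of the index form,
\[
\mathcal{I}(V,V) = \mathcal{I}(J,J) + 2\mathcal{I}(J,W) + \mathcal{I}(W,W),
\]
so the inequality reduces to establishing two claims: (i) $\mathcal{I}(J,W) = 0$, and (ii) $\mathcal{I}(W,W) \geq 0$.

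Claim (i) is a direct integration by parts from the definition \eqref{e0:2}. Writing
\[
\mathcal{I}(J,W) = \int_0^\rho \bigl(\langle \dot J, \dot W\rangle - \langle R_{\gamma(s)}(J,\dot\gamma)\dot\gamma, W\rangle\bigr)\, ds
\]
and integrating $\langle \dot J, \dot W\rangle$ by parts, the Jacobi equation \eqref{e0:1} exactly cancels the curvature term, leaving only the boundary contribution $\langle \dot J(\rho), W(\rho)\rangle - \langle \dot J(0), W(0)\rangle$, which vanishes because $W$ is zero at both endpoints. (This is exactly the calculation giving \eqref{e0:3}, applied to the bilinear extension of $\mathcal{I}$.)

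Claim (ii) is where the no-conjugate-points hypothesis enters. Fix an orthonormal basis $E_1, \ldots, E_d$ of $T_{\gamma(0)} M$ and let $J_1, \ldots, J_d$ be the Jacobi fields along $\gamma$ uniquely determined by $J_i(0) = 0$ and $\dot J_i(0) = E_i$. The absence of conjugate points on $[0,\rho]$ means that $J_1(s), \ldots, J_d(s)$ are linearly independent for every $s \in (0, \rho]$, so on $(0,\rho]$ we may write $W(s) = \sum_i f_i(s)\, J_i(s)$ for smooth scalar functions $f_i$, with $f_i(\rho) = 0$. A direct computation using $\ddot J_i = -R_{\gamma(s)}(J_i,\dot\gamma)\dot\gamma$ and the Lagrange-type symmetry $\langle \dot J_i, J_j\rangle = \langle J_i, \dot J_j\rangle$ (which follows from the Jacobi equation together with $J_i(0) = 0$) collapses the index form to
\[
\mathcal{I}(W,W) = \int_0^\rho \Bigl| \sum_i \dot f_i(s)\, J_i(s) \Bigr|^2\, ds \geq 0.
\]

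The one delicate point in the plan is the smooth representation $W = \sum f_i J_i$ near $s = 0$, where the $J_i$ degenerate. This is handled by the expansion $J_i(s) = s\,E_i + O(s^2)$, so the matrix whose columns are the $J_i(s)$ has an order-$s$ singularity at the origin which is precisely compensated by $W(0) = 0$; this yields $f_i$ smooth up to the endpoint, and is the only nontrivial technical step. Everything else is a routine integration by parts that I would not grind through in detail.
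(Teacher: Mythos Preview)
Your proof is correct and follows essentially the same route as the paper's: both reduce to the decomposition $\mathcal{I}(V,V)=\mathcal{I}(J,J)+2\mathcal{I}(J,W)+\mathcal{I}(W,W)$ with $W=V-J$, kill the cross term by integrating by parts against the Jacobi equation, and conclude from $\mathcal{I}(W,W)\ge 0$. The only difference is that the paper obtains $\mathcal{I}(W,W)\ge 0$ by invoking the standard Index Lemma (do Carmo, Chapter~10, Lemma~2.2) applied to $W$ versus the trivial Jacobi field $0$, whereas you unpack that lemma's proof via the Jacobi basis $\{J_i\}$ with $J_i(0)=0$; the content is the same.
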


Next we have the obvious equality
\[
 \sum_{j=2}^{d}\mathcal{I}(J_{1,j},J_{1,j})+ \sum_{j=2}^{d}\mathcal{I}(J_{2,j},J_{2,j})+2\sum_{j=2}^{d}\mathcal{I}(J_{1,j},J_{2,j}) = \sum_{j=2}^{d}\mathcal{I}(J_{1,j}+J_{2,j},J_{1,j}+J_{2,j}).
 \]
On the other hand, using the above comparison theorem with the vectors $E_{j}$ in place of $V$ and $J_{1,j}+J_{2,j}$ as the Jacobi field $J$, we obtain
\begin{equation}\label{et:1}
\begin{split}
\sum_{j=2}^{d}\mathcal{I}(J_{1,j}+J_{2,j},J_{1,j}+J_{2,j})\le \sum_{j=2}^{d}\mathcal{I}(E_{j},E_{j})&=\sum_{j=2}^{d}\int_{0}^{\rho}\left( |\dot{E}_{j}(s)|^{2}-\langle R(\dot{\gamma}(s),E_{j}(s))E_{j}(s),\dot{\gamma}(s)\rangle \right)ds \\
& =-\int_{0}^{\rho} Ric_{\gamma(s)}(\dot{\gamma}(s),\dot{\gamma}(s))ds\le -k\rho
\end{split}
\end{equation}
where $\rho=d(x,y)$, and therefore
\begin{equation}\label{e7b:20}
2\sum_{j=2}^{d}\mathcal{I}(J_{1,j},J_{2,j})\le -\left( \sum_{j=2}^{d}\mathcal{I}(J_{1,j},J_{1,j})+ \sum_{j=2}^{d}\mathcal{I}(J_{2,j},J_{2,j})+k\rho\right).
\end{equation}

 In the basis $E_{1}=\dot{\gamma}_{x,y}, E_{2},\ldots,E_{d}$ we can take
 \begin{equation}\label{et:2}
 f_{j}=U'E_{j} \quad\text{ and } \quad h_{j}=V'E_{j}, \qquad j=1,\ldots,d.
 \end{equation}

To choose the matrix $O_{U,V}$ as in \eqref{e7:100} we treat separately the cases of odd and even dimensional manifolds, as follows.

\emph{Case I:  $d$ is odd}.   In this case we take $N=d$, so we are back to the classical situation of the orthonormal frame bundle.   Let $A_{U}$ and $A_V$  be the (unique) orthogonal matrices which send $e_{j}$ into $f_{j}$, respectively $e_{j}$ into $h_{j}$, $j=1,\ldots,d$.   We set
\begin{equation}\label{eq:D}
\Delta_{x,y}=A_{V}'V'\tau_{x,y} \Lambda_{x,y}UA_{U}.
\end{equation}

We will choose the matrix $O_{U,V}$ such that, in addition to \eqref{e7:100} we also have
\[
A_{U}'O_{U,V}'A_{V}e_{1}=e_{1}.
\]
This is done as follows.  We will construct an orthogonal matrix $B_{x,y}$ such that
\begin{equation}\label{e7:200}
B_{x,y}e_{1}=e_{1} \text{ and }\tr(B_{x,y}\Delta_{x,y})=-\frac{1}{2}\left(\sum_{j=2}^{d}\mathcal{I}(J_{1,j},J_{1,j})+ \sum_{j=2}^{d}\mathcal{I}(J_{2,j},J_{2,j}) +F(\rho) \right).
\end{equation}
Once this is done, we can take
\[
O_{U,V}=A_{V}B_{x,y}'A_{U}',
\]
which then shows that \eqref{e7:100} and consequently \eqref{e7:16} are satisfied.

To get to terms with  $B_{x,y}$, we choose it to be given in matrix form by
\begin{equation}\label{e7b:10}
B_{x,y}=\met{1 & 0 &0 & 0 & 0& 0 &0 & 0\\  0 &\cos \alpha & \sin \alpha & 0 & 0 & 0 &0 &0  \\ 0 & -\sin \alpha & \cos \alpha & 0&0 &0 &0 &0 \\ 0 & 0 &0 &\cos \alpha & \sin\alpha & 0 &0 &0 \\  0  & 0 &0&-\sin \alpha & \cos\alpha & 0&0&0 \\ \hdotsfor{8} \\ \hdotsfor{8} \\ 0 &0&0 &0 &0 &0 &\cos\alpha &\sin\alpha \\ 0&0&0&0&0&0&-\sin\alpha&\cos\alpha}.
\end{equation}
This is where we actually use the fact that the dimension $d$ is odd: in the above representation we use on the diagonal $(d-1)/2$ blocks of $2\times 2$ unitary matrices.   With this choice, we clearly have  $B_{x,y}e_{1}=e_{1}$ and also $\Delta_{x,y}e_{1}=0$.  Furthermore,  because $B_{x,y}e_{2i}=\cos(\alpha)e_{2i}-\sin(\alpha)e_{2i+1}$ and  $B_{x,y}e_{2i+1}=\cos(\alpha)e_{2i+1}+\sin(\alpha)e_{2i}$, we get that
\[
\tr(B_{x,y}\Delta_{x,y})=\sum_{i=1}^{(d-1)/2}(\langle \Delta_{x,y}B_{x,y}e_{2i},e_{2i} \rangle+\langle \Delta_{x,y}B_{x,y}e_{2i+1},e_{2i+1} \rangle)=\cos(\alpha)\tr\left( \Delta_{x,y}\right)+\sin (\alpha) F_{x,y}
\]
with $F_{x,y}=\sum_{i=1}^{(d-1)/2}\left(\langle\Delta_{x,y}e_{2i+1},e_{2i}\rangle-\langle\Delta_{x,y}e_{2i},e_{2i+1}\rangle \right)$.  Hence, \eqref{e7:200} becomes equivalent to finding $\alpha\in[0,2\pi]$ such that
\begin{equation}\label{e7:100:b}
2\cos(\alpha)\tr\left( \Delta_{x,y}\right)+2\sin (\alpha) F_{x,y}=-\Bigg( \sum_{j=2}^{d}\mathcal{I}(J_{1,j},J_{1,j})+ \sum_{j=2}^{d}\mathcal{I}(J_{2,j},J_{2,j})+F(\rho)\Bigg).
\end{equation}
The key point now is that \eqref{e7b:20} is nothing but the statement that
\[
2\tr\left(\Delta_{x,y} \right)\le -\Bigg( \sum_{j=2}^{d}\mathcal{I}(J_{1,j},J_{1,j})+ \sum_{j=2}^{d}\mathcal{I}(J_{2,j},J_{2,j})+k\rho\Bigg).
\]
In addition to this, since $F(\rho)\ge -2a/\rho$ for small $\rho$ with $a<d-1$, combined with inequality \eqref{e7b:21} gives that,
\[
-\Bigg( \sum_{j=2}^{d}\mathcal{I}(J_{1,j},J_{1,j})+ \sum_{j=2}^{d}\mathcal{I}(J_{2,j},J_{2,j})+F(\rho)\Bigg)<-2(d-1)\frac{\cot(\rho/\omega)}{\omega}-F(\rho)\le-2(d-1)\frac{\cot(\rho/\omega)}{\omega}+2a/\rho<0
\]
for small enough $\rho$ (in fact, it suffices to take small $\rho/\omega$).

On the other hand, since $F(\rho)\le k\rho$ for small $\rho$, we have that
\[
2\tr\left(\Delta_{x,y} \right)\le -\Bigg( \sum_{j=2}^{d}\mathcal{I}(J_{1,j},J_{1,j})+ \sum_{j=2}^{d}\mathcal{I}(J_{2,j},J_{2,j})+k\rho\Bigg)\le -\Bigg( \sum_{j=2}^{d}\mathcal{I}(J_{1,j},J_{1,j})+ \sum_{j=2}^{d}\mathcal{I}(J_{2,j},J_{2,j})+F(\rho)\Bigg).
\]

We have now come to the key point of the construction of $O_{U,V}$, namely solving equation \eqref{e7:100}.  After all these preliminaries, \eqref{e7:100} is in fact equivalent to showing that there exists an angle $\alpha$ such that  \eqref{e7:100:b} is satisfied.  Finally,  simple trigonometry shows that for any $a<c<0$ and any $b$, the equation
\[
\cos(\alpha)a+\sin(\alpha)b=c
\]
has one solution as
\[
\sin(\alpha)=\frac{bc-a\sqrt{a^{2}-c^{2}+b^{2}}}{a^{2}+b^{2}} \text{ and }\cos(\alpha)=\frac{ac+b\sqrt{a^{2}-c^{2}+b^{2}}}{a^{2}+b^{2}}
\]
Taking now $a=2\tr(\Delta_{x,y})$, $b=2F_{x,y}$ and $c=-\Bigg( \sum_{j=2}^{d}\mathcal{I}(J_{1,j},J_{1,j})+ \sum_{j=2}^{d}\mathcal{I}(J_{2,j},J_{2,j})+F(\rho)\Bigg)$ shows that \eqref{e7:100:b} has a solution, in conclusion \eqref{e7:100} does too.  In particular,  the matrix $B_{x,y}$ depends smoothly on $U$ and $V$, hence $O_{U,V}$ also depends smoothly on $U$ and $V$.

\emph{Case II:  $d$ is even.}  In this case we use $N=d+1$.   Recall that we use $e_{1},e_{2},\dots, e_{d+1}$ to denote the standard basis of $\R^{d+1}$ and the vectors $f_{j}$, respectively $h_{j}$ are defined in \eqref{et:2}.    Furthermore, we have a set of $d$ orthogonal vectors, $f_{1},f_{2},\dots, f_{d}$ in a $d+1$ dimensional space.  We then define
\[
f_{d+1}=f_{1}\wedge f_{2}\wedge \dots \wedge f_{d}.
\]
to be the exterior product of the previous $d$ vectors.  With this addition, the vectors $f_{1},f_{2},\dots, f_{d+1}$ form an orthonormal basis in $\R^{d+1}$.  We do the similar thing to the vectors $h_{1},h_{2},\dots, h_{d}$ by defining $h_{d+1}$ to be the exterior product of $h_{1},h_{2},\dots, h_{d}$.

The difference from the previous case is that this time we consider the matrix $A_{U}$ which sends $e_{j}$ into $f_{j}$, $j=1,\ldots, d$, and the vector $e_{d+1}$ into $f_{d+1}$.  Clearly with this choice, $A_{U}$ is actually an orthogonal matrix in $\R^{d+1}$.   Similarly we define the matrix $A_{V}$ to be the matrix sending $e_{i}$ into $h_{i}$ for $i=1,2,\dots,d$ and $e_{d+1}$ into $h_{d+1}$.  Again, $A_{V}$ is an orthogonal matrix.

The rest of the argument is now the same argument as in the case when $d$ is odd, with the choice of $B_{x,y}$ as a $(d+1)\times (d+1)$ matrix such as the one in \eqref{e7b:10} and $\Delta_x,y$ as in \eqref{eq:D} Notice the catch here, namely the dimension of the matrix is $d+1$, an odd number!  The rest of the argument runs exactly in the same way as above with the obvious adjustments.  For instance, equation \eqref{e7:100:b} is the same, only that this time
\[
F_{x,y}=\sum_{i=1}^{d/2}\left(\langle\Delta_{x,y}e_{2i+1},e_{2i}\rangle-\langle\Delta_{x,y}e_{2i},e_{2i+1}\rangle \right)
\]
and the rest of the proof follows the same steps.

Let's wrap up the main findings of this subsection.   We showed that there exists (again, for small $\eta$) a matrix $O_{U,V}$ which depends smoothly on $(U,V)\in\mathcal{N}_{\eta}(x_{0},y_{0})$ such that \eqref{e7:15} and \eqref{e7:16} are satisfied.   In fact we proved that for small enough $\eta>0$, as long as the distance between $x_{0}$ and $y_{0}$ is less than $\eta/2$ and the process  $\left(X_{t},Y_{t}\right)$ stays inside $B(x_{0},\eta)\times B(y_{0},\eta)$, the distance function satisfies $\rho_{t}=\nu_{t}$ (the solution to \eqref{e10:800}).

\subsection{ The construction of the coupling  }\label{extension}  Consider first two independent $N$-dimensional Brownian motions $W_{t}$ and $\widetilde{W}_{t}$.   For a given stopping time $\tau$, we denote $W_{t,\tau}=W_{t}-W_{\tau}$.

We have proved that for a small enough $\eta>0$ and  any $x,y$ with $d(x,y)<\eta$ there exists a smooth choice $O_{U,V}$ on $\mathcal{N}_{\eta}(x,y)$. We will now use this to give a construction of the coupling as indicated in the statement of the theorem.

For any $\eta>0$ we define the $\eta$-neighborhood of the diagonal in $M\times M$ by
\[
D_{\eta}=\{ (x,y): d(x,y)\le\eta \},
\]
and let us also set
\[
\mathcal{D}_{\eta}=\{ (U,V)\in\mathcal{O}(M)\times\mathcal{O}(M): (\pi U,\pi V)\in D_{\eta} \}.
\]

For a fixed pair of points $(x_{0},y_{0})\in D_{\eta/4}$ and frames $U_{0}, V_{0}$ at $x_{0}$, respectively at $y_{0}$, we consider an orthonormal basis $E_{1},\dots E_{d}$ at $x_{0}$ with $E_{1}=\dot{\gamma}_{x_{0},y_{0}}(0)$ and extend this to a local orthonormal basis on $B(x_{0},2\eta)$ and then by parallel transport also to $B(y_{0},2\eta)$.   Using the local recipe outlined above we can construct a coupling with $\rho_{t}=\nu_{t}$ up to the first time $t$ when the base process $(X_{t},Y_{t})$ hits the boundary of the set $B(x_{0},\eta)\times B(y_{0},\eta)$.   Let's call this exit time $\tau_{1}$.  At $(x_{1},y_{1})=(X_{\tau_{1}},Y_{\tau_{1}})$ we have the orthogonal basis  $E_{1},\dots,E_{d}$ used in the local construction, which at $x_{1}$ satisfies  $E_{1}=\dot{\gamma}_{x_{1},y_{1}}$, and $U_{1}:=U_{\tau_{1}}$ and $V_{1}:=V_{\tau_{1}}$ are the frames obtained from \eqref{e:5b}.

The next step is to extend the construction of the coupling beyond time $\tau_{1}$.   There are two cases to be considered here.

If the point $(x_{1},y_{1})$ lies inside $D_{\eta/2}$, we can use the starting point $(x_{1},y_{1})$ and  continue to run $(U_{t},V_{t})$ following \eqref{e:5b} using now the Brownian motion $W_{t,\tau_{1}}$ with the time range $t\ge\tau_{1}$.  As above we let $\tau_{2}$ be the first time the process $(X_{t+\tau_{1}},Y_{t+\tau_{1}})$ hits the boundary of $B(x_{1},\eta)\times B(y_{1},\eta)$, and we set $(x_{2},y_{2})=(X_{\tau_{1}+\tau_{2}},Y_{\tau_{1}+\tau_{2}})$ and also $U_{2}=U_{\tau_{1}+\tau_{2}}$ and $V_{2}=V_{\tau_{1}+\tau_{2}}$.

On the other hand, if the point $(x_{1},y_{1})$ lands outside $D_{\eta/2}$, then we run the motions $U_{t}$ and $V_{t}$ for  $t\ge\tau_{1}$ with the system
\[
\begin{cases}
dU_{t}=\sum_{i=1}^{N}H_{i}(U_{t})\circ dW_{t,\tau_{1}}^{i}\\
dV_{t}=\sum_{i=1}^{N}H_{i}(V_{t})\circ d\widetilde{W}_{t,\tau_{1}}^{i}\\
X_{t}=\pi U_{t}\\
Y_{t}=\pi V_{t}.
\end{cases}
\]

In other words, $U_{t},V_{t}$ run as independent Brownian motions on $\mathcal{O}(M)\times\mathcal{O}(M)$, and $X_{t},Y_{t}$ run as independent Brownian motions on the base manifold $M$.  We continue with this construction for time $t$ in the interval $[\tau_{1},\tau_{1}+\tau_{2}]$, where the terminal time $\tau_{1}+\tau_{2}$ is the first time the process $(X_{t},Y_{t})$ lands in $D_{\eta/4}$, and we denote $(x_{2},y_{2})=(X_{\tau_{1}+\tau_{2}},Y_{\tau_{1}+\tau_{2}})$.

In both cases above we constructed the processes $U_{t},V_{t}$ defined up to the time $\tau_{1}+\tau_{2}$, and $(x_{2},y_{2})$ is either in $D_{\eta/2}$ or outside it.  Inductively, we can now repeat the construction above, to show that we can extend the construction of the processes for another $\tau_{3}$ units of time, and so on.   If for a certain $n$, $\tau_{n}=+\infty$, then we certainly take all other stopping times $\tau_{m}=0$ for $m>n$.

One of the main problems is to show that the construction can be extended for all times $t\ge0$, in other words that
\[
\sum_{n\ge1}\tau_{n}=+\infty.
\]
We are going to do this separately for the first part of the theorem, and argue differently for the second and third part.

For the case $k<0$, the idea is that as long as the process $(X_{t},Y_{t})$ stays inside $D_{\eta/2}$, we know that the distance process $\rho_{t}$ satisfies
\[
\frac{d\rho_{t}}{dt}=-\frac{1}{2}F(\rho_{t}),
\]
thus $\rho_{t}'\ge -k\rho_{t}/2$ which implies that $\rho_{t}$ is actually increasing as a function of $t$. This means that if $\eta$ is small enough, then in finite (deterministic) time, the process $(X_{t},Y_{t})$ exits $D_{\eta/2}$.  Once the process $(X_{t},Y_{t})$ exits the set $D_{\eta/2}$, $X_t$ and $Y_t$ run independently until they hit the set $D_{\eta/4}$, and then they stay in $D_{\eta/2}$ for at most a finite (deterministic) amount of time, after which they exit again $D_{\eta/2}$.  In particular we see that the processes $X_{t},Y_{t}$ have to run independently infinitely many times, and it is this fact that allows us to show that $\sum_{n\ge1}\tau_{n}=+\infty$.  This is done using the Borel-Cantelli's lemma.

For the moment, assume that we have two independent Brownian motions $X_{t},Y_{t}$ starting at $x_{0},y_{0}$ with $d(x_{0},y_{0})=\eta/2$.  If $\tau$ is the first time when $X_{t},Y_{t}$ are within distance $\eta/4$ to each other, we want to get an estimate on $\p(\tau>\delta)$ for some $\delta>0$.   To do this, we use the following inclusion
\[
\{\zeta_{X,\eta/16}>\delta \}\cap\{\zeta_{Y,\eta/16}>\delta \}\subset\{ \tau>\delta\}
\]
where $\zeta_{X,\eta/16}$ is the first exit time of $X_{t}$ from the ball $B(x_{0},\eta/16)$ and similarly $\zeta_{Y,\eta/16}$ is the first time  $Y_{t}$ exits the ball $B(y_{0},\eta/16)$.   This inclusion can be stated in words as follows.   If $X_t$ and $Y_t$ stay inside $B(x_{0},\eta/16)$, respectively $B(y_{0},\eta/16)$, up to time $\delta$, and since $x_{0},y_{0}$ are distance $\eta/2$ apart, it follows that $X_t$ and $Y_t$ are not within $\eta/4$ of each other in the time interval $[0,\delta]$.  The conclusion we draw from this is that
\[
\p(\tau>\delta)\ge\p(\zeta_{X,\eta/16}>\delta)\p(\zeta_{Y,\eta/16}>\delta).
\]
Finally, since the the Ricci curvature is bounded below, we can invoke now the estimate on the exit times from balls, for instance \cite[Theorem 3.6.1]{Elton}, to obtain that for any point $x$ on $M$ we have
\[
\p_{x}(\zeta_{\eta/16}\le \delta)\le e^{-Cr^{2}/\delta},
\]
where the constant $C>0$ depends only on the lower bound on the Ricci curvature and the dimension of the manifold. Thus for a fixed $\eta>0$ we obtain that
\begin{equation}\label{e7:40}
\p_{x}(\zeta_{\eta/16}> \delta)>1- e^{-C\eta^{2}/\delta}:=C_{2}>0,
\end{equation}
for a certain constant $C>0$, and therefore
\[
\p(\tau>\delta)\ge C_{2}^{2}.
\]

With this at hand we get that
\[
\sum_{n\ge1}\p(\tau_{n}>\delta)=+\infty,
\]
and using Borel-Cantelli's lemma we conclude that $\sum_{n\ge1}\tau_{n}=+\infty$, which shows that the construction of the coupling extends for all times $t\geq 0$.

For the other case of $k\ge0$ and $F(\rho)\ge0$, clearly $\nu_{t}$ is going to be non-increasing and the bulk of the argument is complementary to the previous one.
More precisely, in the above proof it was the independent motions which played the main role, while here the main role is played by the coupling.   To get to terms, note  that if we start the coupling with points $x_{0},y_{0}$ such that $d(x_{0},y_{0})<\eta/4$, then, since the distance between the processes does not increase, the process $(X_{t},Y_{t})$ stays in $D_{\eta/2}$ up to the time $\sum_{n\ge1}\tau_{n}$.   The issue is to show that this sum is always infinite.  What we want to do is to find a lower bound on $\p(\tau_{1}>\delta)$.   Using the same notation as above, we have
\begin{equation}\label{*}
\{ \zeta_{X,\eta/16} >\delta \}\subset \{\tau_{1}>\delta \}.
\end{equation}
To see this, we follow the construction until either $X$ or $Y$ hit the ball of radius $\eta$ centered at $x_{0}$, respectively $y_{0}$.   Now,  if $X$ stays inside $B(x_{0},\eta/16)$ on the time interval $[0,\delta]$, since $d(x_{0},y_{0})<\eta/4$ and the processes remain at fixed or non-increasing distance, an application of the triangle inequality shows that $Y$ remains inside $B(y_{0},9\eta/16)$ on the time interval $[0,\delta]$, which in turn implies \eqref{*}.   Using again \eqref{e7:40} we get that
\[
\p(\tau_{1}>\delta)\ge C_{3}>0
\]
for a constant $C_{3}$ which is independent of the starting points.   Since this is applicable to all stopping times $\tau_{n}$, we learn again from Borel-Cantelli's lemma that $\sum_{n\ge1}\tau_{n}=+\infty$.

\subsection{Finishing off}  In the previous section we constructed the coupling and we proved that it is defined for all times.  We now want to show that the construction actually does what the Theorem asks for.  This is already spelled out in the previous subsection in a certain form.

For the first part ($k<0$), on each of the regions where the coupling is inside $D_{\eta/2}$, the distance is non-decreasing, and therefore it is larger than the starting distance which is at most $\eta/4$.  On the other hand, if the coupling exits $D_{\eta/2}$, then it runs as independent Brownian motions until it hits again $D_{\eta/4}$, and consequently the distance is at least $\eta/4$ apart.  In both regimes the distance does not get smaller than the starting distance and this concludes the proof of the second part of Theorem~\ref{t:100}.

For the last part of the Theorem,  the coupling never leaves $D_{\eta/2}$ and for all times the distance functions $\rho_{t}$ equals the solution of the equation \eqref{e10:800}.   \qedhere

Though we are done proving the Theorem, we put here an interesting consequence of the proof.  There is a more general statement which guarantees the existence of a coupling which is not necessarily Markovian but co-adapted and its proof is based on a very simple modification of the proof which will leave to the reader.
\begin{corollary}\label{Cor:9}
Assume the same geometric assumptions as in Theorem~\ref{t:7} ($d\ge2$, positive injectivity radius and \eqref{e7:0}).

Let $T>0$ and $\rho:[0,T)\to[0,\infty)$ be a function such that for some $0\le a<d-1$, we have
\begin{equation}\label{e:n:c:1}
-\frac{\rho(t)}{2}\le \rho'(t)\le \frac{a}{\rho(t)} \text{ with }\rho(0)=\rho_{0}.
\end{equation}
\begin{enumerate}
\item There exist positive constants $\epsilon,\delta>0$ such that for any points $x_{0},y_{0}\in M$, with $d(x_{0},y_{0})\le \epsilon$, we can find a co-adapted coupling of Brownian motions $X_{t},Y_{t}$ such that $X_{0}=x_{0}$, $Y_{0}=y_{0}$ and $d(X_{t},Y_{t})=\rho(t)$ for $t\in[0,\delta)$.

\item Moreover, for $k<0$, we can actually take $\epsilon$ and $\delta$ to be small enough and extend this coupling for all $t\ge0$ such that $d(X_{t},Y_{t})\ge\rho_{0}$.

\item In the case $k\ge0$, we can find a small $\epsilon>0$ such that for any points $x_{0},y_{0}$ with $d(x_{0},y_{0})\le \epsilon$, there is a co-adapted coupling of Brownian motions $X_{t},Y_{t}$ with $X_{0}=x_{0}$ and $Y_{0}=y_{0}$ such that $d(X_{t},Y_{t})=\rho(t)$ for all $t\in[0,T)$.
\end{enumerate}

\end{corollary}

Essentially, one has to follow the same argument as in the proof of the Theorem, the only difference being that we need to replace $d(x,y)$, $U$, $V$ and the existence of the map $O_{U,V}$ satisfying \eqref{e7:15} and \eqref{e7:16} with $\rho_{t}$, $U_{t}$, $V_{t}$ and one of a map $O_{t}$ such that
\[
\begin{split}
&\langle U_{t}e_{i},\dot{\gamma}_{\pi U_{t},\pi V_{t}}  \rangle_{\pi U_{t}}-\langle V_{t}O_{t}e_{i},\dot{\gamma}_{\pi U_{t},\pi V_{t}}  \rangle_{\pi V_{t}}=0  \\
&\sum_{i=1}^{N}\mathcal{I}(J_{i},J_{i})=-\rho(t).
\end{split}
\]

We would like to point out that this in agreement with our results obtained in \cite{PP} the case of Euclidean spaces and spheres where we actually get a complete characterization of all coupling for which the distance function is deterministic.

\section{Refinements and Comments}\label{s:10}

The proof of Theorem~\ref{t:7} spreads on several pages, and some comments on it are in order. The first observation is that the conditions imposed are essential for the construction. For example the positivity of the injectivity radius is needed for the local construction.  The Ricci curvature bounded from below insures the non-explosion of the Brownian motion on one hand, and on the other hand it is important in the estimate of the exit times employed in the proof of the global existence of the coupling and also for the estimates involving the index form from \eqref{et:1}.

That the sectional curvature is bounded from above does not seem to be optimal even though it is an important piece in the proof of the existence of the coupling via the index form comparison on $M$ with the index form of a sphere.   Geometrically, we certainly need to make sure that the Brownian motions we try to couple do not get trapped in regions of extremely high sectional curvature  where the Brownian motions tend to get close to one another.    It seems though that the optimal condition would be that the injectivity radius of the manifold is positive.   However this certainly requires a different argument from the one provided here.

Another aspect is that the global existence of the choice of the map $O_{U,V}$ is tied to the existence of a smooth choice of an orthonormal frame on $M$.   On an arbitrary Riemannian manifold this can be done only locally and this is why we had to go one more step,  from the local existence of the coupling to its global existence.   There are though a few cases when the existence can be proved globally, one of which is the case of surfaces.   In this case, for any two points $x,y$ not at each other cut-locus, there is a single perpendicular direction to the geodesic joining $x$ and $y$.  Using this we can show that there is a global choice of $O_{U,V}$ as long as $\pi U,\pi V$ are not at each other cut-locus.

Another case in which we can construct a global version of $O_{U,V}$ is the one in which $M$ is parallelizable, namely the tangent bundle is trivializable, or otherwise put, there exist vector fields $X_{1},X_{2},\dots X_{d}$ which are independent at each point.  This amounts to the existence of a global section of the orthonormal frame bundle.   It is for instance the case of $S^{3}$ and $S^{7}$ and also of any Lie group with the left or right invariant metric.

The couplings we constructed in Theorem~\ref{t:7} are defined for all times $t\ge0$, and the conditions in \eqref{e7:0} were necessary in the proof.   There is however a case when the injectivity and upper bound on the sectional condition can be dispensed of if one only needs the coupling to be defined up to the first exit time of the coupling from a relatively compact set.   For completeness, we record the result here and use it in the next section.  The proof is the same as the one given above adjusted with a stopping time.

\begin{theorem}\label{t:8}  Let $M$ be a complete $d$-dimensional Riemannian manifold and $D\subset M$ a relatively compact open set of $M$ with a smooth boundary.   Then, there exists $\epsilon>0$ such that for any $x,y\in D$ with $d(x,y)<\epsilon$, there exist a shy coupling of two Brownian motions on $M$ starting at $x$ and $y$, defined up to the first exit time of either of the processes from $D$.

If in addition $Ric\ge 0$, there also exists a fixed-distance coupling Brownian motions on $M$ starting at $x$ and $y$, defined up to the first exit time of either of the processes from $D$.
\end{theorem}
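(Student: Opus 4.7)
The plan is to treat Theorem~\ref{t:8} as a localization of Theorem~\ref{t:7}: the relative compactness of $D$ supplies automatically, on a slightly larger set, the two geometric hypotheses (positive injectivity radius and sectional curvature bounded above) that were imposed globally in Theorem~\ref{t:7}, so the construction from the previous section can be run essentially verbatim and then stopped at the first exit from $D$.

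First I would fix an auxiliary open set $D^{*}$ with $\bar D \subset D^{*}$ and $\overline{D^{*}}$ still compact (possible because $D$ is relatively compact with smooth boundary). On the compact set $\overline{D^{*}}$, continuity and compactness give a uniform lower bound $k_{0}$ on the Ricci tensor, a uniform upper bound on the sectional curvature, and, using completeness of $M$ and the fact that the injectivity radius is positive at each point and lower semi-continuous, a uniform strictly positive lower bound $i_{0}$ on the injectivity radius along $\overline{D^{*}}$. These are exactly the ingredients used in the proof of Theorem~\ref{t:7} to produce the local smooth choice of $O_{U,V}$ satisfying \eqref{e7:15} and \eqref{e7:16}.

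Next, for the shy coupling I would fix any $k<0$ and choose $\epsilon>0$ smaller than both the local radius $\eta$ provided by the local construction of Theorem~\ref{t:7} (applied with the bounds on $\overline{D^{*}}$) and $i_{0}/3$. The SDE \eqref{e:5b} with the matrix $O_{U,V}$ built in the proof of Theorem~\ref{t:7} gives, for any starting points $x,y$ with $d(x,y)<\epsilon$, a solution with $\rho_{t}=e^{-kt/2}\rho_{0}\ge\rho_{0}$ on a short time interval, and then I would patch successive local solutions exactly as in the paragraph around \eqref{extension}. Since everything takes place inside $D^{*}$, the Borel--Cantelli argument based on the ball-exit-time estimate \cite[Theorem 3.6.1]{Elton} still applies — it uses only the Ricci lower bound on $\overline{D^{*}}$, not anything global — and shows that the patching stopping times do not accumulate. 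Finally, I would stop the coupling at the first time either coordinate leaves $D$ (not $D^{*}$); this is legitimate because exit from $D$ strictly precedes exit from $D^{*}$, so the coupling is well defined on the whole stopped interval, and by construction the distance remains at least $\rho_{0}>0$ throughout, giving a shy coupling. For the second statement I would repeat the same procedure with $k=0$, which is permitted as soon as the Ricci bound $0\le Ric$ holds on $\overline{D^{*}}$ — a consequence of the hypothesis $Ric\ge 0$ — yielding $\rho_{t}=\rho_{0}$ up to exit from $D$.

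The main obstacle is the bookkeeping rather than any new idea: one has to verify carefully that positivity of the injectivity radius on the compact set $\overline{D^{*}}$ follows from completeness of $M$, and that the exit-time estimate \eqref{e7:40} in the proof of Theorem~\ref{t:7} really requires only the Ricci lower bound on the region where the coupling lives, so that the Borel--Cantelli patching step transfers to the present local setting without modification. Once those points are checked, the rest of the argument is a direct stopping-time-restriction of the proof of Theorem~\ref{t:7}, and in particular no global control on the sectional curvature or the injectivity radius of $M$ is needed.
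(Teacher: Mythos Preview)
Your proposal is correct and is essentially the approach the paper intends: the paper itself gives no separate proof, stating only that ``The proof is the same as the one given above adjusted with a stopping time,'' and your write-up spells out exactly this localization --- extracting uniform curvature and injectivity-radius bounds on a compact neighborhood $\overline{D^{*}}$ of $\bar D$, rerunning the local construction and patching of Theorem~\ref{t:7} there, and stopping at the first exit from $D$.
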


The suggestion given by Kendall in \cite[Section 4]{Kendall} for the construction of the shy coupling is to use a form of perverse coupling (in the terminology of \cite{Kendall}).  However, this is not sufficient to get the fixed distance coupling.  Particularly this is very clearly illustrated in the case of surfaces. Indeed,  since the dimension is $2$,  we have just one dimension left in the orthogonal to the geodesic joining $X_{t}$ and $Y_{t}$ and then there are essentially only two choices of an orthogonal map from $T_{x}$ to $T_{y}$ (for $x,y$ not at each other cut-locus) which preserves the geodesic direction.  One choice is the one in which in the perpendicular direction to the geodesic, the particles move in the same direction which gives the mirror coupling or in the opposite directions which gives the perverse couplings.  None of these give the fixed distance coupling.

Another point is that one can get a shy coupling using stochastic flows.   In short, the idea is to impose conditions such that the flow stays a Brownian motion and this can be done if the direction in the Cameron-Martin space satisfies a certain ode.   If the initial value of this direction is non-zero everywhere then we obtain a weak form of shy coupling.  See for details \cite{Elton,MR2541273}.

Though we have dealt with a coupling of two Brownian motions, we can actually construct a family of Brownian motions indexed by some set.  For instance, given $x,y$ to points in $M$, the construction in \cite[Theorem 10.37]{Stroock2}, gives a family of Brownian motions $X_{t}^{s}$ for $s$ running in $[0,d(x,y)]$ such that $\frac{d}{ds}X^{s}_{t}\le e^{-kt/2}d(x,y)$.

What we can do is the following.  Take $\epsilon>0$ small enough and then we can construct a family $X_{t}^{x}$ and $Y_{t}^{y}$ where $x,y\in M$ with distance $d(x,y)<\epsilon$ such that at least for small time $t\in[0,\delta]$ we get that $d(X_{t}^{x},Y_{t}^{y})=e^{-kt/2}d(x,y)$.   In the case $k\ge0$ we obtain in fact that the coupling is defined for all $t\ge0$.   The whole idea is that in our local construction of Theorem~\ref{t:100}, the choice of the orthogonal matrix boils down to choosing the angle $\alpha$ for the matrix $B_{x,y}$ in \eqref{e7b:10}.

\section{Applications}\label{s:11}

\subsection{The Brownian Lion and the Man}

We started this paper with the Lion and the Man and we close it with a simple interpretation of the results in this language.  Assume we have a Riemannian manifold $M$ satisfying the conditions of Theorem~\ref{t:7}.   Then, given a Brownian Lion running on $M$, Theorem \ref{t:7} assures that there is a strategy for the Brownian Man which keeps him at a safe positive distance from the Lion for all times.

In addition, if the Ricci is non-negative, then the Brownian Man can choose a strategy which keeps him at fixed distance from the Brownian Lion.  This must be particularly frustrating for the Lion especially if they start relatively close to each other.

Theorem \ref{t:7} also shows that if the Ricci curvature is bounded below by a positive constant, then given a Brownian Man, the Brownian Lion has a strategy which will bring him arbitrarily close to its meal.

\subsection{Lower Bounds on Ricci Curvature} As we pointed out in the introduction,  \cite[Corollary 1.4]{ReSt} shows that one can characterize the condition $Ric \ge k$ in terms of couplings.   We now have an optimal version of it which is formally put here.

\begin{corollary}  Assume $M$ is a complete Riemannian manifold.    Then the following two statements are equivalent.
\begin{enumerate}
\item $Ric_{x} \ge k$ for all $x\in M$.
\item For any point $z\in M$, there exist $r_{z},\delta_{z}>0$ such that for any $x,y\in B(z,r_{z})$ we can find a Markovian coupling of Brownian motions $X_{t},Y_{t}$ starting at $x,y$ with the property that
\[
d(X_{t},Y_{t})=e^{-kt/2}d(x,y) \text{ for } 0\le t\le \delta_{z}\wedge \zeta_{z}
\]
where $\zeta_{z}$ is the first time either $X_{t}$ or $Y_{t}$ exit the ball $B(z,r_{z})$.
\end{enumerate}
\end{corollary}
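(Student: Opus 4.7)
The proof splits into two implications. The forward direction $(1) \Rightarrow (2)$ is a localized reading of Theorem~\ref{t:7}, while the converse $(2) \Rightarrow (1)$ is the local form of the ``coupling implies Ricci'' half of \cite[Corollary~1.4]{ReSt}.

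For $(1) \Rightarrow (2)$, fix $z \in M$ and choose $r_z > 0$ small enough that $\overline{B(z, 2r_z)}$ is compact. On this compact set the sectional curvature is bounded from above and the injectivity radius is bounded from below by positive constants, so the geometric hypotheses \eqref{e7:0} that drove the proof of Theorem~\ref{t:7} are available in restriction to $B(z, r_z)^2$. The construction of a smooth $O_{U,V}$ satisfying \eqref{e7:15}--\eqref{e7:16} (with the angle $\alpha$ of the block-diagonal matrix \eqref{e7b:10} tuned so that the bounded variation part of $d\rho_t$ equals $-(k/2)\rho_t\,dt$) depends on the two ingredient inequalities \eqref{e7b:21} (from the sectional upper bound) and \eqref{e7b:20} (from $Ric \geq k$), both of which hold on our ball. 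Stitching the resulting local couplings together via the stopping-time mechanism of Subsection~\ref{extension} produces the desired Markovian coupling with $d(X_t, Y_t) = e^{-kt/2} d(x, y)$ on $[0, \delta_z \wedge \zeta_z]$ for some positive $\delta_z$ supplied by the local argument; the sign of $k$ dictates which of Theorem~\ref{t:7}(1)--(3) we invoke, but the construction itself is uniform.

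For $(2) \Rightarrow (1)$, fix $z \in M$ and a unit tangent vector $v \in T_z M$, and for small $s > 0$ set $y_s = \exp_z(sv)$. The hypothesis furnishes a Markovian coupling starting at $(z, y_s)$ with $d(X_t, Y_t) = e^{-kt/2} s$ for $0 \leq t \leq \delta_z \wedge \zeta_z$, and in particular the weaker inequality $d(X_t, Y_t) \leq e^{-kt/2} d(z, y_s)$ on this interval. This is precisely the hypothesis of the ``coupling implies Ricci'' direction of \cite[Corollary~1.4]{ReSt}, whose proof is local in nature: applying It\^o's formula to $d^2(X_t, Y_t)$ and reading off the $t \to 0^+$ derivative yields $(\Delta^{c} d^2)(z, y_s) \leq -k\,d(z, y_s)^2$ for the generator $\Delta^c$ of the coupled diffusion. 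Since the minimum of $\Delta^{c} d^2$ over all Markovian couplings coincides, via the index form identity recorded in Section~\ref{s:5}, with a Ricci-type quantity, passing to the limit $s \to 0$ along the direction $v$ yields $Ric_z(v, v) \geq k$. Varying $v$ and $z$ gives (1).

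The principal obstacle in the converse is identifying the minimum of $\Delta^{c} d^2$ over Markovian couplings with the Ricci curvature at leading order in $s$. For the first moment $\Delta^{c} \rho$, the corresponding identification is exactly the content of the index form inequality \eqref{e7b:20}, which exploits that the Jacobi field $J_{1,j} + J_{2,j}$ and the parallel-transported $E_j$ share boundary values so that the index lemma applies. The passage to $d^2$ introduces an extra factor of the distance, but the same Jacobi-field analysis along the shrinking geodesic $\gamma_{z, y_s}$ yields the correct $s^2$ asymptotics of $\Delta^{c} d^2$, with the leading coefficient supplied by $Ric_z(v, v)$. Once this identification is in hand, the equality hypothesis in (2) saturates the inequality exactly at the Ricci level, closing the loop.
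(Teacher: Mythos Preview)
Your forward direction $(1)\Rightarrow(2)$ agrees with the paper, which simply invokes Theorem~\ref{t:8} (the localized form of Theorem~\ref{t:7}).

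For the converse $(2)\Rightarrow(1)$ you diverge from the paper, and the argument has a gap. The paper does not read the generator directly via It\^o's formula on $d^2(X_t,Y_t)$; instead it follows \cite{ReSt} through the Dirichlet heat semigroup on $B(z,r_z)$: from the coupling one obtains $p_tf(x)-p_tf(y)\le|\nabla f|_{B(z,r_z)}d(x,y)\E[e^{-k(t\wedge\zeta_z)/2}]$, hence $|\nabla p_t f(x)|\le|\nabla f|_{B(z,r_z)}\E[e^{-k(t\wedge\zeta_z)/2}]$, and then the implication (v)$\Rightarrow$(i) of \cite{ReSt} turns this gradient estimate into $Ric\ge k$ by a short-time contradiction argument. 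Your route instead rests on the assertion that the minimum of $\Delta^c\rho$ (or $\Delta^c d^2$) over all Markovian couplings equals $-Ric_z(v,v)\,s+o(s)$, but neither reference you cite establishes this. Identity \eqref{e:8} computes $\mathcal{L}\rho$ only for couplings built from an isometry $O_{x,y}$ as in \eqref{e:5}, whereas the coupling handed to you by hypothesis~(2) is an arbitrary Markovian coupling and may involve extra randomization (a nontrivial $K$-part, as in the $S^2$ construction of Theorem~\ref{t:5}). More seriously, inequality \eqref{e7b:20} is proved \emph{using} $Ric\ge k$ as a hypothesis---it is a step in the forward direction---and it gives an \emph{upper} bound on $\Delta^c\rho$ for the synchronous coupling, not a \emph{lower} bound valid for every coupling, which is what you need; invoking it here is circular. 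Your strategy can in principle be completed by expanding the mixed index forms $\mathcal{I}(J_{1,i},J_{2,j})$ to first order in $s$ and optimizing $\mathrm{tr}(C\Lambda)$ over contractions $\|C\|\le 1$, but that computation is nowhere in the paper and is not supplied by your sketch.
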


As a clarification, $X_{t},Y_{t}$ need to be defined up to the exit time from the ball $B(z,r_{z})$ or up to $\delta_{z}$, whichever comes up first.

\begin{proof}  The implication $1)\Longrightarrow 2)$ follows from Theorem~\ref{t:8}.  For the reverse implication we follow the same lines as in \cite{ReSt}, particularly the implication (x)$\Longrightarrow$(i) and we will sketch only the main differences.

Instead of considering the heat kernel of the Laplacian on the manifold we consider the heat kernel $p_{t}(x,y)$  of half the Laplacian on $B(z,r_{z})$ with the Dirichlet boundary conditions and its corresponding action $(p_{t}f)(x)=\int_{B(z,r_{z})}p_{t}(x,y)f(y)dy$.   Using this we can prove that condition 2) implies for any points $x,y\in B(z,r_{z})$ and any compactly supported function $f$ on $B(z,r_{z})$,
\[
p_{t}f(x)-p_{t}f(y)=\E[f(X_{t\wedge \zeta_{z}})-f(Y_{t \wedge \zeta_{z}})]\le |\nabla f|_{B(z,r_{z})}d(x,y)\E[e^{-k(t\wedge \zeta_{z})/2}]
\]
from which one immediately gets by letting $y$ approach $x$ that
\[
|\nabla p_{t}f(x)|\le |\nabla f|_{B(z,r_{z})}\E[e^{-k(t\wedge \zeta_{z})/2}].
\]
Now, with very little changes in the argument of the implication (v)$\Longrightarrow$(i) from \cite{ReSt}, if $Ric_{z}(v,v)<k$ at some point $z$ for some $v$ we arrive at the following conclusion
\[
k\E\left[1-\frac{t\wedge \zeta_{z}}{t}\right]\ge \epsilon +o(1)
\]
for some $\epsilon >0$.   This certainly leads to a  contradiction as we let $t\to 0$.    \qedhere
\end{proof}

\section*{Acknowledgements} We want to thank Wilfrid Kendall and Krzysztof Burdzy for several interesting discussions on the existence of fixed-distance coupling on the sphere which took place in the summer of 2009 while the first author visited University of Warwick.  This motivated us to undertake, extend and complete this program on manifolds.

We also want to thank Rob Neel for pointing to us that we do not have to extend the coupling at the cut-locus and that it suffices to let the Brownian motions run independently near the cut-locus.  Also thanks are in place to Elton P. Hsu for a discussion around Markovian couplings and Marc Arnaudon for several comments and references.

We would also like to thank the reviewer of this paper for the careful reading of the manuscript and for the suggestions which lead to an improvement of the present version.

\providecommand{\bysame}{\leavevmode\hbox to3em{\hrulefill}\thinspace}
\providecommand{\MR}{\relax\ifhmode\unskip\space\fi MR }
\providecommand{\MRhref}[2]{%
  \href{http://www.ams.org/mathscinet-getitem?mr=#1}{#2}
}
\providecommand{\href}[2]{#2}

\end{document}